\newtheorem{theorem}{Theorem}[section]
\newtheorem{proposition}[theorem]{Proposition}
\newtheorem{corollary}[theorem]{Corollary}
\theoremstyle{definition}
\newtheorem{definition}[theorem]{Definition}
\newtheorem{remark}[theorem]{Remark}
\def\r{\mathbb R}
\def\s{\mathbb S}
 \title{The hanging chain problem with respect to a circle}
 \author{Rafael L\'opez}
\address{ Departamento de Geometr\'{\i}a y Topolog\'{\i}a\\  Universidad de Granada. 18071 Granada, Spain}
\email{rcamino@ugr.es}
\keywords{catenary, Euler-Lagrange equations, curvature, calculus of variations}
\subjclass{53A04, 49K05, 49J05}
\begin{document}

\begin{abstract} Let $\s^1$ be a circle in Euclidean plane. We consider the problem of finding the shape of a planar curve which is an extremal of the   potential energy that measures the  distance to $\s^1$.   We describe the shape of these curves distinguishing if the curves lie in the inside or outside  of  $\s^1$. We extend the problem for energies that are powers to the distance to $\s^1$.
  \end{abstract}
  \maketitle

\section{Introduction and motivation of the problem}\label{s1}

The problem of finding a mathematical description of the shape of  hanging chain  acted upon solely by gravity attracted the interest of scientists for long time. Contrary to what Galileo Galilei   thought, the parabola is not the solution of the problem. Separately, Huygens, Leibniz and Johann Bernouilli proved that the solution of the problem is the      catenary,  a much more difficult curve because it involves   exponential functions. Since then, the catenary is of interest not only of mathematicians, but also of physicists, architects and engineers.   The hanging chain problem is a typical introductory problem in many  textbooks of calculus of variations. Also, the catenary is interesting in geometry because it  is the generating curve of the catenoid, the only rotational minimal surface besides the plane. The mathematical literature of the catenary is so extensive that  it would be impossible to include   here. Some of  the most recent works related to catenary  are \cite{bk,cd,dh,kky,mc}. 

From the viewpoint of calculus of variations, the approach to the hanging chain problem is the following. Assume ideal conditions on the chain (constant density, inextensibility) and  constant gravity. Let $(x,y)$ be standard coordinates  of the Euclidean plane $\r^2$. Suppose that  the chain is given by the curve $y=y(x)$, $x\in [a,b]$. The  gravitational energy  of $y=y(x)$ is 
\begin{equation}\label{e0}
\int_{a}^b y(x) \, dl= \int_{a}^b y(x)\sqrt{1+y'(x)^2}\, dx.
\end{equation}
Here $dl=\sqrt{1+y'(x)^2}\, dx$ is the arc element  of the curve   $y(x)$. The integrand in \eqref{e0} represents  the weight of the chain  with respect to the horizontal line $L$ of equation $y=0$ (here density and gravity are equal to $1$). Applying standard techniques of calculus of variations to the energy functional \eqref{e0}, one deduces the differential equation of extremals of this energy. This equation is $yy''=\sqrt{1+y'^2}$ and the solution is   the  catenary whose expression is  $y(x)=a\cosh(\frac{x-b}{a})$, $a,b\in\r$, $a>0$. Recently, the author  has generalized the concept of catenary to other ambient spaces such as the $3$-sphere and the hyperbolic space \cite{lo1}. For this, the reference line $L$  in \eqref{e0} to measure the potential is replaced  by a geodesic of the space. In such a case,  the potential of the curve  is now calculated with the intrinsic distance to that geodesic. In collaboration with L. C. B. Da Silva, the notion of the catenary  have also extended to other ambient spaces, including their relations with minimal surfaces of rotational type: see \cite{dl1,dl2,dl3,lo1,lo2}.

This paper concerns to the  extension of the notion of the catenary considering the original problem in $\r^2$ but replacing the reference line $L$  by   a circle. To be precise, {\it the  hanging problem with respect to a circle} asks what  is the shape of a planar chain whose potential energy is the distance to a given circle.

 Let 
$\s^1=\{(x,y)\in\r^2:x^2+y^2=1\}$ the circle of radius $1$ centered at the origin.  If $(x,y)\in\r^2$,  we use polar coordinates $x=r\cos \theta$, $y=r\sin \theta$ with $r\geq 0$ and $\theta\in\r$. Then the potential of a point  is the Euclidean distance to $\s^1$, that is,  $|r-1|$. Consider an ideal   chain $\gamma\colon [s_1,s_2]\to\r^2$, $I\subset\r$,   suspended from two fixed endpoints. Let us parametrize $\gamma$ in polar coordinates by $\gamma(s)=r(s)(\cos\theta(s),\sin\theta(s))$.   Since the distance of a point $r(s)$ to $\s^1$ is $|r(s)-1|$, the  potential energy of $\gamma$ is
\begin{equation}\label{ee0}
E[\gamma]= \int_{s_1}^{s_2}|r(s)-1|\, dl=\int_{s_1}^{s_2}|r(s)-1|\sqrt{r(s)^2\theta'(s)^2+r'(s)^2}\, ds, 
\end{equation}
because the arc element $dl$   is $\sqrt{r(s)^2\theta'(s)^2+r'(s)^2}\, ds$. The asked shape of the chain appears when the curve reaches a minimum energy value for $E$ for any variation of $\gamma$. As usual in the calculus of variations, tackling this problem is difficult in its generality. For this reason, and as a necessary condition, we only require the curve to be a local extremum of $E$.

\begin{definition} A curve $\gamma\colon I\to\r^2$, $I\subset\r$, which is an extremal (or critical point) of $E$ is said to be   a catenary with respect to the circle $\s^1$.
\end{definition}

If there is no confusion, we will say simply catenary. The  purpose of this paper is describe the catenaries with respect to $\s^1$,   showing their main properties. As we will see,  the solutions cannot intersect the circle $\s^1$. Hence it is a feature of the problem that catenaries will present different behaviors if they lie in each on the  two components of $\r^2\setminus\s^1$. In Sect. \ref{s2} we will see that the catenaries which lie in the inside  of $\s^1$ have periodic functions $r(s)$ (Thm. \ref{t1}).   However, the ones lie in the outside  of $\s^1$ are of curves that are radial graphs on bounded sub-arcs of $\s^1$, being asymptotic to two rays from the origin of $\r^2$ (Thm. \ref{t2}).  In Sect. \ref{s3}, we will extend the initial problem by replacing the distance $|r-1|$ to $\s^1$ by powers of that distance, $|r-1|^\alpha$. 

To conclude this introduction, it is necessary to make the following observation. The classical catenary problem   is not very accurate to real conditions if we consider large scales in the problem. Realistically, on Earth,  the gravitational field is radial and  the distance must measured to the center of the Earth. This was  rightly discussed in \cite{dh}. In that article the potential energy is similar to \eqref{ee}, but instead of $|r-1|$ in the integrand,  it was considered to be $1/r$. Also in the same paper, radial potentials of the form $r^\alpha$ were studied. These energy functionals do not cover the energy \eqref{ee0} because in the hanging chain problem with respect to a circle, the energy is not radial.  


\section{The solution of the hanging problem}\label{s2}

The   extremals  of the energy functional $E$ are obtained by standard arguments of calculus of variations. 

\begin{proposition}\label{pr1}
 Let $\gamma$ be a curve given in polar coordinates $(r,\theta)$. Then  $\gamma$  is an extremal of the energy $E$ given in \eqref{ee} if and only if $\gamma$ is a ray from the origin or $\gamma$ can be parametrized by $\gamma(s)=r(s)(\cos s,\sin s)$ and  $r$ satisfies the second order equation 
\begin{equation}\label{eq2}
r(r-1)r''+r'^2(2-3r)+r^2(1-2r) =0.
\end{equation} 
\end{proposition}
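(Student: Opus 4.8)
The plan is to treat $E$ as the action of the Lagrangian $L(r,\theta,r',\theta')=|r-1|\sqrt{r^{2}\theta'^{2}+r'^{2}}$ associated with a curve $\gamma(s)=r(s)(\cos\theta(s),\sin\theta(s))$ written in an arbitrary parameter $s$, and to exploit its two structural features: $L$ does not depend on $\theta$, and $L$ is positively homogeneous of degree one in $(r',\theta')$, so that the value of $E$ does not depend on the parametrization of $\gamma$. Since an extremal cannot meet $\s^{1}$, we work on the open set where $r\neq 1$ (and $r>0$, the origin being excluded); there the sign of $r-1$ is locally constant, so the integrand is smooth and $|r-1|$ may be replaced by $\pm(r-1)$ in the computations.

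First I would write down the Euler--Lagrange equation for the cyclic variable $\theta$. Since $\partial L/\partial\theta=0$ it reduces to $\tfrac{d}{ds}(\partial L/\partial\theta')=0$, that is, to the first integral
\begin{equation*}
|r-1|\,\frac{r^{2}\theta'}{\sqrt{r^{2}\theta'^{2}+r'^{2}}}=c
\end{equation*}
for a constant $c$. If $c=0$, then (since $r\neq 0,1$) we get $\theta'\equiv 0$, hence $\theta$ is constant and $\gamma$ lies on a ray from the origin; conversely, a direct computation of the first variation of $E$ along $\theta=\mathrm{const}$ shows, after one integration by parts, that every radial segment is an extremal. If $c\neq 0$, then $\theta'$ never vanishes, so $\theta$ is a strictly monotone function of $s$; using that $E$ is parametrization-free, we may reparametrize $\gamma$ by $\theta$ and rename the parameter $s$, which puts $\gamma$ in the form $\gamma(s)=r(s)(\cos s,\sin s)$, i.e. in the gauge $\theta'\equiv 1$.

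It then remains to identify the equation satisfied by $r$ in this gauge. Restricting to variations $r\mapsto r+\varepsilon\phi$, which are admissible variations of $\gamma$ with fixed endpoints, the first variation of $E$ equals $\int(\mathrm{EL}_{r})\,\phi\,ds$, where $\mathrm{EL}_{r}$ is the Euler--Lagrange operator of the reduced Lagrangian $\widetilde L(r,r')=|r-1|\sqrt{r^{2}+r'^{2}}$. A direct but somewhat lengthy computation of $\tfrac{d}{ds}(\partial\widetilde L/\partial r')-\partial\widetilde L/\partial r$, after clearing the factor $(r^{2}+r'^{2})^{-3/2}$, gives
\begin{equation*}
\mathrm{EL}_{r}=\frac{\operatorname{sgn}(r-1)\,r}{(r^{2}+r'^{2})^{3/2}}\left(r(r-1)r''+r'^{2}(2-3r)+r^{2}(1-2r)\right),
\end{equation*}
so, since $r\neq 0$, the condition $\mathrm{EL}_{r}=0$ is exactly \eqref{eq2}. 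This proves that an extremal with $c\neq 0$, written in the above gauge, satisfies \eqref{eq2}. For the converse, if $\gamma(s)=r(s)(\cos s,\sin s)$ with $r$ solving \eqref{eq2}, then $\mathrm{EL}_{r}=0$, and the remaining Euler--Lagrange equation (the one for $\theta$) is automatic: the degree-one homogeneity of $L$ yields the identity $r'\,\mathrm{EL}_{r}+\theta'\,\mathrm{EL}_{\theta}\equiv 0$, and here $\theta'\equiv 1$, so $\mathrm{EL}_{\theta}\equiv 0$ as well; hence $\gamma$ is an extremal of $E$, which closes the equivalence.

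I expect the main difficulty to be of two kinds. The computational one is the explicit evaluation of $\mathrm{EL}_{r}$: keeping track of the $\sqrt{r^{2}+r'^{2}}$ denominators and of $\operatorname{sgn}(r-1)$ while checking that everything collapses precisely to the left-hand side of \eqref{eq2}. The conceptual one is justifying the gauge fixing: that it is legitimate to restrict to variations $r\mapsto r+\varepsilon\phi$ for the forward direction (because $E$ is parametrization-free, an extremal of $E$ is in particular a critical point for this subfamily of variations), and that for the converse the $\theta$-equation need not be verified separately but follows from the homogeneity of $L$.
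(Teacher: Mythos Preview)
Your proposal is correct and follows essentially the same route as the paper: write the Lagrangian, use the cyclic variable $\theta$ to obtain the first integral $|r-1|\,r^{2}\theta'/\sqrt{r^{2}\theta'^{2}+r'^{2}}=c$, split into the cases $c=0$ (ray) and $c\neq 0$ (reparametrize by $\theta$ and compute the $r$-equation, which gives \eqref{eq2}). Your treatment is in fact slightly more complete than the paper's, since you explicitly address the converse implication---both that a radial segment is an extremal and that, in the gauge $\theta'\equiv 1$, the $\theta$-Euler--Lagrange equation follows from the $r$-equation via the homogeneity identity $r'\,\mathrm{EL}_{r}+\theta'\,\mathrm{EL}_{\theta}\equiv 0$---whereas the paper leaves this direction implicit.
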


\begin{proof} 
 The Lagrangian of $E$ is 
$$\mathcal{L}(r,r',\theta,\theta')=|r-1|\sqrt{r^2\theta'^2+r'^2}.$$
Using the fundamental lemma of calculus of variations,   the Euler-Lagrange equations  are 
\begin{equation*}
  \frac{\partial \mathcal{L}}{\partial \theta}=\frac{d}{ds}\frac{\partial \mathcal{L}}{\partial \theta'},\quad  \frac{\partial \mathcal{L}}{\partial r}=\frac{d}{ds}\frac{\partial \mathcal{L}}{\partial r'}. 
\end{equation*}
Since $\mathcal{L}$ does not depend on $\theta$, the first equation   implies that there is a constant $c\in\r$ such that 
$$\frac{r(s)^2|r(s)-1|\theta'(s)}{\sqrt{r(s)^2\theta'(s)^2+r'(s)^2}}=c\quad \mbox{for all $s\in I$}.$$
Suppose that $\theta'$ vanishes at some $s=s_1$. Then the above identity implies $c=0$. Hence, $(r(s)-1)\theta'(s)=0$ for all $s\in I$. If $r(s)-1\not=0$ at some $s\in I$, then $\theta'=0$ identically. This proves that   $\theta$ is a constant function and the curve $\gamma$ is a ray from the origin. This proves the first case.  The other case  is that $r(s)=1$ identically. A computation of the  second Euler-Lagrange equation   gives  $\theta'(s)=0$ for all $s\in I$. Thus $\theta$ is a constant function and this would yield that $\gamma$ is a point. This case is not possible. 

Consequently, if $\gamma$ is not a ray from the origin then  the function $\theta'$ cannot vanish. In particular, we can parametrize $\gamma$ by the parameter $\theta$, that is, $\gamma(s)=r(s)(\cos s,\sin s)$. We now compute again the second Euler-Lagrange equation. For this,  we can assume   $r-1\geq 0$   (similarly if $r-1\leq 0$). 
 We calculate each of the terms of this equation, obtaining   
 \begin{equation*}
 \begin{split}
 \frac{\partial \mathcal{L}}{\partial r}&=\frac{2 r^2-  r+r'^2}{\sqrt{r^2+r'^2}}\\
 \frac{d}{ds}\left(\frac{\partial \mathcal{L}}{\partial r'}\right)&=\frac{- r(rr''-r'^2)+r^3r''+r'^4}{(r ^2+r'^2)^{3/2}}.
 \end{split}
 \end{equation*}
 This gives \eqref{eq2}.   
\end{proof}

From now on, the solution given as a ray from the origin will be discarded in this paper. We will also assume that  the catenary is written in nonparametric way $r=r(s)$. In such a case the energy \eqref{ee0} is now 
\begin{equation}\label{ee}
E[\gamma]= \int_{s_1}^{s_2}|r(s)-1|\, dl=\int_{s_1}^{s_2}|r(s)-1|\sqrt{r(s)^2 +r'(s)^2}\, ds, 
\end{equation}

We give another characterization of the catenary in terms of the curvature $\kappa$ of $\gamma$. 

\begin{proposition} \label{pr21}
A curve $\gamma$ is an extremal of $E$ if and only if 
\begin{equation}\label{an}
\kappa=\frac{\cos\varphi}{r-1},
\end{equation}
where $\varphi$ is the angle that makes the unit normal vector $N$ of $\gamma$ with the opposite of the vector $\gamma$.
\end{proposition}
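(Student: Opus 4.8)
The plan is to reduce the geometric identity~\eqref{an} to the analytic equation~\eqref{eq2} of Proposition~\ref{pr1}, which already characterizes the extremals of $E$. Rays from the origin satisfy both conditions trivially---for them $\kappa\equiv 0$ and $N$ is orthogonal to $\gamma$, so $\cos\varphi\equiv 0$---so I may assume $\gamma$ is not a ray and, by Proposition~\ref{pr1}, parametrize it by its polar angle: $\gamma(s)=r(s)(\cos s,\sin s)$. Then both $\kappa$ and $\varphi$ become explicit functions of $r,r',r''$, and the desired equivalence ``$\gamma$ is an extremal $\Leftrightarrow$ \eqref{an}'' reduces to the purely algebraic equivalence ``\eqref{eq2} $\Leftrightarrow$ \eqref{an}''.

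Two short computations do the job. First, the curvature of $\gamma$ in polar coordinates: with the moving frame $e_r=(\cos s,\sin s)$, $e_\theta=(-\sin s,\cos s)$ one has $\gamma'=r'e_r+r e_\theta$, so the unit tangent is $T=(r'e_r+r e_\theta)/\sqrt{r^2+r'^2}$ and, for the chosen orientation, the unit normal is $N=(-r e_r+r' e_\theta)/\sqrt{r^2+r'^2}$; differentiating once more and applying the classical formula for the curvature of a plane curve in terms of $\gamma'$ and $\gamma''$ yields
\[
\kappa=\frac{r^2+2r'^2-r r''}{(r^2+r'^2)^{3/2}}.
\]
Second, the angle $\varphi$: the opposite of the position vector $\gamma$ has unit direction $-e_r$, so projecting $N$ onto it gives $\cos\varphi=\pm r/\sqrt{r^2+r'^2}$, with the sign fixed by the orientation of $N$ and by whether $\gamma$ lies inside or outside $\s^1$. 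Substituting both expressions into \eqref{an}, cancelling the common factor $(r^2+r'^2)^{-1/2}$ and multiplying through by $r-1$ turns \eqref{an} into a polynomial identity in $r,r',r''$; after collecting terms this identity is precisely \eqref{eq2}. Since every step is reversible, \eqref{an} holds along $\gamma$ exactly when $r$ solves \eqref{eq2}, hence---by Proposition~\ref{pr1}---exactly when $\gamma$ is an extremal of $E$. The case $r-1\le 0$ is handled as in the proof of Proposition~\ref{pr1}, where it was already noted that \eqref{eq2} is insensitive to the sign of $r-1$; equivalently, the absolute value in the potential only contributes an overall sign that cancels on both sides of \eqref{an}.

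The one point that demands care is the bookkeeping of orientations: the sign of $N$ and the sign of $\cos\varphi$ must be chosen consistently with the conventions of the statement so that the reduction lands on \eqref{eq2} rather than on a sign-variant such as $r(r-1)r''+r'^2(2-r)+r^2=0$; beyond that, the argument is a routine algebraic simplification. A more conceptual, coordinate-free alternative would be to invoke the first-variation formula for a weighted-length functional $\int W\,dl$, whose Euler--Lagrange equation reads $\kappa=\langle\nabla(\log W),N\rangle$: for $W=|r-1|$, whose gradient is a radial field of unit length away from $\s^1$, the right-hand side equals $\cos\varphi/(r-1)$ and \eqref{an} follows at once. Since Proposition~\ref{pr1} is already available, however, the direct computation above is the shortest route.
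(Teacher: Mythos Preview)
Your proof is correct and follows essentially the same route as the paper: both arguments parametrize by the polar angle, write the curvature as $\kappa=(r^2+2r'^2-rr'')/(r^2+r'^2)^{3/2}$, compute the unit normal and hence $\cos\varphi=\pm r/\sqrt{r^2+r'^2}$, and then observe that substituting these into \eqref{an} collapses to \eqref{eq2}. Your treatment is slightly more explicit about the trivial ray case and the orientation bookkeeping, and you append the weighted-length first-variation viewpoint $\kappa=\langle\nabla\log W,N\rangle$ as an alternative, but the core computation is identical to the paper's.
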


\begin{proof} In polar coordinates, the curvature $\kappa$ is 
$$\kappa=\frac{2r'^2+r^2-rr''}{(r^2+r'^2)^{3/2}}.$$
By substituting into \eqref{eq2}, we obtain 
\begin{equation}\label{k2}
\kappa=\frac{r}{(r-1)\sqrt{r^2+r'^2}}.
\end{equation}
On the other hand, the tangent vector of $\gamma$ is 
$$\gamma'=\frac{(r'\cos s-r\sin s,r'\sin s+r \cos s)}{\sqrt{r^2+r'^2}}.$$
 Thus the unit normal vector of $\gamma$ is 
\begin{equation}\label{nn}
N=\frac{(-r'\sin s-r \cos s,r'\cos s-r\sin s)}{\sqrt{r^2+r'^2}}.
\end{equation}
This gives $\langle N,\gamma\rangle=-\frac{r^2}{\sqrt{r^2+r'^2}}$. Hence, the angle $\varphi$ is given by 
$\cos\varphi=-\frac{r}{\sqrt{r^2+r'^2}}$ because $|\gamma|=r$. This identity and \eqref{k2} characterizes extremals in terms of Eq. \eqref{an}.
\end{proof}

\begin{remark} Identity \eqref{an} extends   the analogous property that has the Euclidean catenary in $\r^2$.   In the classical problem, the Euler-Lagrange equation of the energy \eqref{e0} is $yy''=\sqrt{1+y'^2}$. Since the unit normal     is $N=\frac{(-1,y')}{\sqrt{1+y'^2}}$. Then the Euler-Lagrange equation is equivalent to 
$$\kappa=\frac{1}{y\sqrt{1+y'^2}}=\frac{\cos\psi}{y}.$$
Here $\psi$ is the angle between $N$ and the vector $-(0,1)$, the direction of the gravity. Notice that  $y$ is the distance to the line $L$.
\end{remark}
 
 We now study the solutions of Eq. \eqref{eq2}. By general theory of ODE, and in order to assure existence of a solution $r=r(s)$, $s\in I\subset\r$,  of  \eqref{eq2}, we need to discard the initial condition $r(0)=0$ or $r(0)=1$.  On the other hand, and  by the coefficient $r-1$ for $r''$, it is expectable that the solutions of \eqref{eq2} depend whether the initial condition $r(0)$ is less or bigger than $1$.    
 
Although initial conditions cannot take the values $r=0$ or  $r=1$, it is possible that the solution crosses the origin of $\r^2$ or the circle $\s^1$. We will see that both situations cannot occur. Also, we study if there are constant solutions of \eqref{eq2}. The following result is immediate. 
 
 \begin{proposition} \begin{enumerate}
 \item The solutions of \eqref{eq2} cannot attain the value $r=0$ neither $r=1$. 
 \item The only constant solution of \eqref{eq2} is $r(s)=\frac12$, which it represents the circle of radius $r=\frac12$.
 \end{enumerate}
 \end{proposition}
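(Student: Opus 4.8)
The idea is that both statements drop out of substituting the special values directly into \eqref{eq2}.

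Part (2) is the quickest. If $r\equiv c$ is a constant solution then $r'\equiv r''\equiv 0$ and \eqref{eq2} reduces to $c^{2}(1-2c)=0$, so $c=0$ or $c=\frac12$. The value $c=0$ is excluded (it gives the constant map at the origin, not a curve), which leaves $c=\frac12$; conversely $r\equiv\frac12$ obviously satisfies \eqref{eq2}, and $\gamma(s)=\frac12(\cos s,\sin s)$ is the circle of radius $\frac12$. Observe that $\frac12\notin\{0,1\}$, consistently with part (1).

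For part (1) I would argue by contradiction at an arbitrary parameter value $s_{0}$. If $r(s_{0})=1$, the coefficient $r(r-1)$ of $r''$ vanishes there, so \eqref{eq2} evaluated at $s_{0}$ becomes $-r'(s_{0})^{2}-1=0$, which is impossible; hence $r$ never attains the value $1$. If $r(s_{0})=0$, substituting $r=0$ into \eqref{eq2} kills every term except $2\,r'(s_{0})^{2}$, forcing $r'(s_{0})=0$ as well; but then the catenary $\gamma(s)=r(s)(\cos s,\sin s)$ has $\gamma'(s_{0})=r'(s_{0})(\cos s_{0},\sin s_{0})+r(s_{0})(-\sin s_{0},\cos s_{0})=0$, so $s_{0}$ would be a singular point of the curve, which is not allowed. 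Alternatively, one may use that $Q=\dfrac{r^{2}|r-1|}{\sqrt{r^{2}+r'^{2}}}$ — the first integral coming from the first Euler-Lagrange equation when $\theta=s$, as in the proof of Proposition \ref{pr1} — has $\frac{d}{ds}Q$ proportional to the left-hand side of \eqref{eq2} and is therefore constant along solutions; this constant is nonzero (if $Q\equiv 0$ then $r^{2}|r-1|\equiv 0$, which by continuity forces $r\equiv 0$ since $r\equiv 1$ fails \eqref{eq2}), while $Q\to 0$ as $s\to s_{0}$ whenever $r(s_{0})\in\{0,1\}$, a contradiction. This last route proves the whole proposition uniformly.

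The only step that is not a bare one-line substitution is the value $r=0$: evaluating \eqref{eq2} there yields merely $r'(s_{0})=0$, so one still has to bring in an extra ingredient — the regularity of $\gamma$, or the first integral $Q$ — to reach a contradiction. I expect this to be the main (and rather mild) obstacle; everything else is routine.
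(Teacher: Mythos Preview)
Your proposal is correct and follows essentially the same route as the paper: direct substitution of $r=0$, $r=1$, and $r\equiv c$ into \eqref{eq2}, with the case $r=0$ closed by observing that $r'(s_0)=0$ makes $\gamma$ singular there. Your alternative argument via the conserved quantity $Q$ is a nice add-on not in the paper, though note the mild indeterminacy $0/0$ when both $r(s_0)=0$ and $r'(s_0)=0$; the regularity argument already handles that cleanly.
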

 \begin{proof} The second statement is immediate. For the first one, if $r(s)=0$ at some point, then \eqref{eq2} gives $r'(s)=0$ and the curve $\gamma$ is not regular at $s$. Similarly, if $r(s)=1$ at some point, then \eqref{eq2} yields $r'(s)^2+1=0$, which it is not possible.
 \end{proof}

We now study of solutions of Eq. \eqref{eq2}. For this, we consider  initial conditions. Without loss of generality, we assume $0\in I$. Let 
\begin{equation}\label{ini}r(0)=r_0>0,\quad r'(0)=0.
\end{equation}
The condition $r'(0)=0$ implies that the solution $r(s)$ is symmetric.  

\begin{proposition} \label{pr-s}
Any solution of  \eqref{eq2}-\eqref{ini} is symmetric about   the vertical axis $s=0$.
\end{proposition}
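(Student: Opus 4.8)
The plan is to use the standard device for autonomous second-order ODEs that are \emph{even} in the first derivative, together with uniqueness of solutions. First I would invoke the preceding proposition: the solution $r$ of \eqref{eq2}--\eqref{ini} never attains the values $r=0$ or $r=1$, so the coefficient $r(r-1)$ of $r''$ in \eqref{eq2} stays nonzero along the solution. Consequently, near $s=0$ we may put \eqref{eq2} into normal form
$$r'' = \frac{r'^2(3r-2)+r^2(2r-1)}{r(r-1)} =: F(r,r'),$$
where $F$ is smooth (indeed real-analytic) on the open set $\{r\neq 0,\ r\neq 1\}$. In particular the initial value problem $r''=F(r,r')$, $r(0)=r_0$, $r'(0)=0$, has a unique maximal solution by Picard--Lindelöf.

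Next I would introduce the reflected function $\bar r(s):=r(-s)$. Then $\bar r'(s)=-r'(-s)$ and $\bar r''(s)=r''(-s)$, and because $F$ depends on its second argument only through its square, one checks immediately that $\bar r''=F(\bar r,\bar r')$; that is, $\bar r$ solves the same equation on the reflected interval. Its initial data are $\bar r(0)=r(0)=r_0$ and $\bar r'(0)=-r'(0)=0$, which coincide with \eqref{ini}. By the uniqueness just recorded, $\bar r\equiv r$ on the common interval of definition, so the maximal interval of existence of $r$ is itself symmetric about $0$ and $r(-s)=r(s)$ there, which is exactly the claimed symmetry about $s=0$.

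The write-up is short, and the only point that genuinely needs attention is the legitimacy of the normal-form reduction: the whole argument rests on $r(r-1)\neq 0$ along the solution, and this is precisely the content of the earlier proposition, so once that fact is cited there is essentially no obstacle. (Alternatively, one can set up the first-order system for $(r,r')$ on the open set $\{r\neq 0, r\neq 1\}$ and apply the uniqueness theorem directly there; this is the same observation in different dress, and I would only include it if a referee asked for more detail.)
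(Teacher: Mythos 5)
Your argument is correct and is essentially the paper's own proof: reflect the solution via $\bar r(s)=r(-s)$, observe that the equation is even in $r'$ so $\bar r$ satisfies \eqref{eq2}--\eqref{ini}, and conclude by uniqueness. The extra care you take in justifying uniqueness (writing \eqref{eq2} in normal form, which is legitimate since $r(r-1)\neq 0$ along solutions) is a welcome but minor elaboration of the same idea.
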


\begin{proof}
 Define the function $\bar{r}(s)=r(-s)$. Then $\bar{r}$ satisfies \eqref{eq2}-\eqref{ini}, so by uniqueness, $\bar{r}(s)=r(-s)$ for all $s\in I$.  

\end{proof}

 Equation \eqref{eq2} is an autonomous differential equation. The behaviour of the solutions of this equation can be studied as an ODE system by introducing two new functions. As usually, let $u=r$ and $v=r'$.  Then  \eqref{eq2} is equivalent to
\begin{equation}\label{eqs}
\left(\begin{array}{l}u\\ v\end{array}\right)'= \left(\begin{array}{c}v\\ \dfrac{u(2u-1)}{u-1}+\dfrac{3u-2}{u(u-1)}v^2\end{array}\right)
\end{equation}
The phase plane $A=\{(u,v)\in\r^2:u\in (0,1)\cup (1,\infty), v\in\r\}$ is the space of solutions of \eqref{eqs}. Existence of uniqueness of the ODE system implies that two solutions of \eqref{eqs}, viewed as trajectories in $A$, cannot intersect. This provides a foliation of the set $A$.

The unique equilibrium point is $P=(\frac12,0)$, which corresponds with the constant solution $r(s)=1/2$. The linearization of the ODE system at $P$ gives 
$$\left(\begin{array}{ll}0&1\\ -2&0\end{array}\right).$$
This matrix has two purely imaginary distinct eigenvalues. So the linearized system has a  center at $P$. As illustrated in the phase plane portrait sketched in Fig. \ref{figphase}, all trajectories contained in the domain $(0,1)\times\r$ go around the equilibrium point $P$. The direction of rotation is obtained by picking a point, e.g., $(u,v)=(1,0)$. At this point we find the vector $(0,-2)$, which points in the clockwise direction. Hence, the trajectories look like ellipses with the clockwise direction on them. Since the trajectories are closed curves, this implies that the solutions are periodic functions. 

Another consequence of the phase portrait of Fig. \ref{figphase} is that all trajectories cross the $u$-axis. This implies that the value $v=0$ is   attained in any trajectory. Consequently, the initial condition $r'(0)=0$ in \eqref{ini} does not loose generality in the initial conditions. Furthermore, the trajectories contained in the subdomain $(0,1)\times\r$ meet the $u$-axis only at two points, whereas of the subdomain $(1,\infty)\times\r$ meet the $u$-axis only at one point.

 \begin{figure}[hbtp]
 \begin{center}
\includegraphics[width=.5\textwidth]{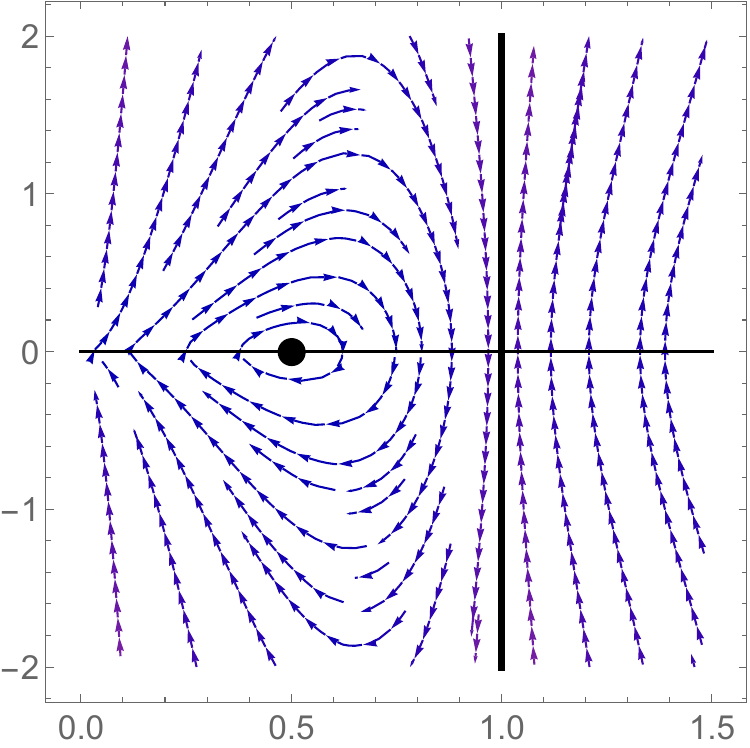} 
\end{center}
\caption{Phase portrait of the ODE system \eqref{eqs}. The equilibrium point is $P=(\frac12,0)$.  On the left side of the vertical line $u=1$ (black), the trajectories are closed curves around $P$. On the right side of $u=1$, the curves are graphs on the $v$-line, going the trajectories from $-\infty$ to $\infty$.  }
\label{figphase}
\end{figure}

We will obtain a first integration of \eqref{eq2}.

\begin{proposition}\label{pra} Let $r=r(s)$ be a solution of \eqref{eq2}-\eqref{ini}. Then the function $r(s)$ is solution of
\begin{equation}\label{fi}
\pm \int_{r_0}^r\frac{dr}{\sqrt{\pm(r^2(r-1)^2-r_0^2(r_0-1)^2})}\, dr=\frac{s}{r_0(r_0-1)}+c_1,
\end{equation}
where $c_1\in\r$ is an integration constant.
\end{proposition}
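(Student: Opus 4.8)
The statement is a first integration of \eqref{eq2}, so the plan is to reduce the order of \eqref{eq2} and then separate variables. The engine is the conserved quantity
\[
\Phi(s):=\frac{r(s)^2+r'(s)^2}{r(s)^4\bigl(r(s)-1\bigr)^2},
\]
which I claim is constant along every solution $r$ of \eqref{eq2} on any interval where $r\notin\{0,1\}$. To discover it: since \eqref{eq2} is autonomous one may regard $z=r'^2$ as a function of $r$ along a solution, and with $r''=\tfrac12\,dz/dr$ equation \eqref{eq2} becomes the \emph{linear} first-order ODE
\[
\frac{dz}{dr}+\frac{2(2-3r)}{r(r-1)}\,z=\frac{2r(2r-1)}{r-1}.
\]
A partial-fraction computation shows $r^{-4}(r-1)^{-2}$ is an integrating factor, and since $2r-1=\frac{d}{dr}\bigl(r(r-1)\bigr)$ and $r^{3}(r-1)^{3}=\bigl(r(r-1)\bigr)^{3}$, the right-hand side has the elementary primitive $-\bigl(r(r-1)\bigr)^{-2}$; collecting terms gives $r'^2/\bigl(r^4(r-1)^2\bigr)+1/\bigl(r^2(r-1)^2\bigr)=C$, that is, $\Phi\equiv C$. (This is just the Beltrami identity $r'\,\partial_{r'}\mathcal L-\mathcal L=\mathrm{const}$ for the autonomous Lagrangian $\mathcal L=|r-1|\sqrt{r^{2}+r'^{2}}$ in disguise, which reads $r^{2}(r-1)/\sqrt{r^{2}+r'^{2}}=\mathrm{const}$; and in any case one may simply \emph{verify} $\Phi'\equiv0$ by differentiating and using \eqref{eq2} to eliminate $r''$, a computation in which the left-hand side of \eqref{eq2} reappears as a factor.)

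Next I would evaluate the constant on the initial data \eqref{ini}: at $s=0$ one has $r=r_0$, $r'=0$, hence $C=\Phi(0)=1/\bigl(r_0^{2}(r_0-1)^{2}\bigr)$. Therefore along the solution
\[
r'^{2}=\frac{r^{4}(r-1)^{2}}{r_0^{2}(r_0-1)^{2}}-r^{2}=\frac{r^{2}\bigl(r^{2}(r-1)^{2}-r_0^{2}(r_0-1)^{2}\bigr)}{r_0^{2}(r_0-1)^{2}}.
\]
Since the left side is $\ge0$ and $r\ne0$, the bracket $r^{2}(r-1)^{2}-r_0^{2}(r_0-1)^{2}$ is automatically $\ge0$ wherever the solution exists, so taking square roots below is legitimate; no appeal to the phase portrait is needed for this.

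Finally I would separate variables. On a maximal open subinterval where $r'\ne0$ — the complement of the (isolated, by Fig.~\ref{figphase}) turning points — $r$ is strictly monotone, so $s$ may be viewed as a function of $r$; taking square roots above and rearranging,
\[
\frac{dr}{r\,\sqrt{\,r^{2}(r-1)^{2}-r_0^{2}(r_0-1)^{2}\,}}=\pm\frac{ds}{|r_0(r_0-1)|},
\]
and integrating from $0$ (where $r=r_0$) to $s$ gives \eqref{fi}, the branch sign together with moving $\operatorname{sgn}\bigl(r_0(r_0-1)\bigr)$ onto the left being what lets the right-hand side be written $s/\bigl(r_0(r_0-1)\bigr)+c_1$. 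The identity then extends to all of $I$ by reflecting across the turning points (cf. Proposition~\ref{pr-s}) and, for catenaries inside $\s^1$, by periodicity. There is no genuine analytic obstacle here; the only things to watch are the absolute value $|r-1|$ (treat $r>1$ and $r<1$ separately — the first integral has the same form), the sign bookkeeping just mentioned, and the fact that \eqref{fi} is an identity on each monotone arc of $r$, not a single global formula.
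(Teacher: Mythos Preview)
Your proof is correct and follows essentially the same route as the paper: both reduce order via $z=p=r'^2$ viewed as a function of $r$, obtain the same linear first-order ODE, solve it (you phrase this as an integrating factor $r^{-4}(r-1)^{-2}$, the paper as the product ansatz $p=fg$ with $f=r^4(r-1)^2$, which are the same computation), evaluate the constant at \eqref{ini}, and separate variables. Your added remark linking the first integral to the Beltrami identity for the autonomous Lagrangian is a nice observation the paper does not make.
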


\begin{proof}
  Let introduce $w=r'$, where $w=w(r)$ and $p=w^2$. Then \eqref{eq2} becomes 
 $$r(r-1)p'+(4-6r)p+2r^2(1-2r). $$
 This is a linear first order equation. We solve with the Bernouilli method. So, let $p(r)=f(r)g(r)$. Then we can write the above equation as
 $$f(r)g'(r)+g(r)\left(f'+\frac{4-6r}{r(r-1)}\right)=\frac{4r^2-2r}{r-1}.$$
 Vanishing the parenthesis, we obtain $f(r)=r^4(r-1)^2$. Replacing in the above equation, we have
 $$g'(r)=\frac{2(2r-1)}{r^3(r -1)^3}.$$
 We solve $g$ by separation of variables, obtaining
 $$g(r)=c-\frac{1}{r^2(r-1)^2},\quad c\in\r.$$
 Thus $p=r^2(c(r-1)^2-1)$. Coming back, we have 
 $$r'^2=r^2(cr^2(r-1)^2-1).$$
 At $s=0$, we get the constant $c$, namely, 
 $$c=\frac{1}{r_0^2(r_0-1)^2}.$$
 Then
  \begin{equation}\label{eqa}
  r'^2r_0^2(r_0-1)^2=r^2(r^2(r-1)^2-r_0^2(r_0-1)^2).
  \end{equation}
From this equation, we obtain $r'$ and integrating by separable variables, we get \eqref{fi}. 
\end{proof}

With the aid of Prop. \ref{pra} together the phase portrait (Fig. \ref{figphase}) we will give a complete description of the catenaries. We distinguish if $r_0$ is less or bigger than $1$ in the initial condition \eqref{ini}.

 \begin{theorem}[Case $r_0<1$]\label{t1}
 Let $r=r(s;r_0)$ by a solution of \eqref{eq2}-\eqref{ini}. Suppose $r_0<1$. Then $r$ is the constant function $r(s)=\frac12$ or $r(s)$ satisfies the following properties:
 \begin{enumerate}
 \item The function $r(s)$ is periodic. Let $T>0$  denote its period.  In each interval $[0,T)$ the function $r(s)$ has exactly one minimum, which is less than $1/2$, and one maximum, which is bigger than $1/2$.
 \item We have $ r(s;1-r_0)=r(s+T;r_0)$.
 \item If $r_0\to 0$ (resp. $r_0\nearrow 1$), then $\gamma$ converges to a double covering of the segment $\{0\}\times (-1,1)$ (resp.   of   $ (-1,1)\times \{0\}$).

 \end{enumerate}
 \end{theorem}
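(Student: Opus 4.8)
The plan is to read off all three statements from the first integral \eqref{eqa} together with the phase portrait of Fig.~\ref{figphase}. Recall from \eqref{eqa} that along a solution $r=r(s;r_0)$ one has
\begin{equation*}
r'^2 r_0^2(r_0-1)^2 = r^2\bigl(r^2(r-1)^2 - r_0^2(r_0-1)^2\bigr).
\end{equation*}
Introduce the auxiliary function $\phi(t) = t(t-1)$ for $t\in(0,1)$, so that $\phi$ is negative on $(0,1)$, vanishes at the endpoints, and attains its minimum $-\tfrac14$ at $t=\tfrac12$; equivalently $t^2(t-1)^2 = \phi(t)^2$ is a function with a single interior maximum. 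The equation then reads $r'^2\,\phi(r_0)^2 = r^2\bigl(\phi(r)^2 - \phi(r_0)^2\bigr)$, and since $r_0\neq \tfrac12$ forces $\phi(r_0)^2 < \tfrac1{16}$ while we must have $\phi(r)^2 \geq \phi(r_0)^2$ at every point of the trajectory, the range of $r$ is confined to the set $\{t\in(0,1): |\phi(t)|\geq|\phi(r_0)|\}$, which (because $|\phi|$ is unimodal on $(0,1)$) is a closed interval $[r_{\min},r_{\max}]$ with $r_{\min}<\tfrac12<r_{\max}$; moreover $r'$ vanishes exactly at the two endpoints $r_{\min},r_{\max}$ where $\phi(r)^2=\phi(r_0)^2$, and is nonzero in between. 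This already gives item (1): the trajectory in the $(u,v)$-plane is the closed curve from the phase portrait, $r$ oscillates monotonically between $r_{\min}$ and $r_{\max}$, so it is periodic with some period $T>0$, attaining on $[0,T)$ exactly one minimum $r_{\min}<\tfrac12$ and one maximum $r_{\max}>\tfrac12$. The period is $T = 2\,|\phi(r_0)|\int_{r_{\min}}^{r_{\max}} \frac{dr}{r\sqrt{\phi(r)^2-\phi(r_0)^2}}$, a finite integral since the singularities at the endpoints are of inverse-square-root type.

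For item (2), the key observation is the symmetry $\phi(1-t) = (1-t)(-t) = t(t-1) = \phi(t)$, hence $\phi(r_0)^2 = \phi(1-r_0)^2$. Therefore the solutions with initial data $r_0$ and $1-r_0$ satisfy the \emph{same} first-integral relation \eqref{eqa}, i.e. they lie on the same trajectory in the phase plane $A$ and have the same $[r_{\min},r_{\max}]$ and the same period $T$. By Prop.~\ref{pr-s} the solution $r(s;r_0)$ is even, so $r(0;r_0)=r_0$ is one of the two extrema — say the maximum if $r_0>\tfrac12$ — and, $T/2$ later, $r(T/2;r_0)$ is the other extremum, which must equal $r_{\min}$. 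One then identifies $r_{\min}$ with $1-r_0$: indeed the two roots of $\phi(t)^2=\phi(r_0)^2$ in $(0,1)$ other than possibly $r_0$ itself are $r_0$ and $1-r_0$ (since $\phi(t)=\phi(r_0)$ forces $t(t-1)=r_0(r_0-1)$, a quadratic with roots $r_0$ and $1-r_0$), so $\{r_{\min},r_{\max}\}=\{r_0,1-r_0\}$. Thus $r(s;r_0)$ and $r(s;1-r_0)$ are the same periodic solution up to a shift by $T/2$ along $s$; checking the direction of the shift against the evenness at $s=0$ gives exactly $r(s;1-r_0)=r(s+T;r_0)$ — here I should be careful that the stated shift is by the full period $T$ rather than $T/2$, which must be reconciled with the fact that both $r_0$ and $1-r_0$ are honest points of a single $T$-periodic orbit; the correct reading is that as $r_0$ ranges over $(0,\tfrac12)\cup(\tfrac12,1)$ the pair $\{r_0,1-r_0\}$ parametrizes the same orbit, and the assertion is the reparametrization identity matching the two choices of basepoint, which I will spell out by evaluating both sides at $s=0$ and using uniqueness.

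For item (3), I analyze the limiting behaviour of the orbit as $r_0\to0^+$ and as $r_0\nearrow1^-$. In the first case $\phi(r_0)\to0$, so the constraint $\phi(r)^2\geq\phi(r_0)^2$ becomes vacuous and $[r_{\min},r_{\max}]\to(0,1)$; the equation degenerates to $r'^2 = r^2\phi(r)^2/\phi(r_0)^2 \to \infty$ except near $r=0$ and $r=1$, so the curve spends almost all its arclength near those two values — i.e. near the origin of $\r^2$ and near the point of $\s^1$ — while $\theta=s$ advances through a full turn; in the limit the trace collapses onto the radial segment joining the origin to $\s^1$ in the direction $\theta$, traversed out and back, i.e. a double cover of $\{0\}\times(-1,1)$ after rotating so that the limiting direction is the $y$-axis (the precise limiting direction is fixed by the choice of basepoint $s=0$). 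The case $r_0\nearrow1^-$ is the mirror image under $t\mapsto 1-t$: again $\phi(r_0)\to0$, $[r_{\min},r_{\max}]\to(0,1)$, and the same collapse occurs; distinguishing the two limits (segment along $\{0\}\times(-1,1)$ versus along $(-1,1)\times\{0\}$) is just a matter of which extremum sits at $s=0$, i.e. whether the chain starts near the circle or near the origin, and this flips under $r_0\leftrightarrow1-r_0$. To make this rigorous I would fix $s=0$ at a maximum and pass to the limit in the arclength-vs-$\theta$ relation, using dominated convergence away from the endpoints and the explicit integrable singularity at the endpoints to control the contribution there.

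The main obstacle I anticipate is item (2): pinning down that the shift is exactly $T$ (not $T/2$, not $-T/2$) and proving it cleanly. Everything hinges on the interplay between the even symmetry at $s=0$ (Prop.~\ref{pr-s}) and the half-period symmetry swapping the two extrema $r_0\leftrightarrow1-r_0$; one must track orientations on the closed orbit carefully, and it is easy to be off by a half-period or a sign. The cleanest route is probably to define $\tilde r(s):=r(s+c;r_0)$ for the appropriate constant $c\in\{T/2,T\}$, verify directly that $\tilde r$ solves \eqref{eq2} with $\tilde r(0)=1-r_0$, $\tilde r'(0)=0$, and invoke uniqueness of the IVP; the content of the step is then reduced to the elementary computation that $\{r_0,1-r_0\}$ are precisely the two zeros of $r'$ on the orbit. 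The remaining parts are comparatively routine once the first integral \eqref{eqa} and the unimodality of $|\phi|$ on $(0,1)$ are in hand.
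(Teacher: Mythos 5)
Your proposal is correct in substance and, for item (2), it is essentially the paper's own argument: identify the second critical value on the orbit as $1-r_0$ (the two roots of $t(t-1)=r_0(r_0-1)$ in $(0,1)$ are $r_0$ and $1-r_0$), then conclude by uniqueness of the initial value problem after shifting the parameter. Where you genuinely differ is item (1): the paper gets periodicity from the linearization at $P=(\tfrac12,0)$ plus the sketched phase portrait, whereas you read it off the first integral \eqref{eqa} together with the unimodality of $t^2(t-1)^2$ on $(0,1)$, which confines the solution to a compact level curve avoiding the equilibrium and gives the explicit, finite period integral. Your route is the more solid one: purely imaginary eigenvalues of the linearization do not by themselves guarantee a nonlinear center, while the closed level sets of \eqref{eqa} do. You were also right to distrust the shift in item (2): with $T$ the full period, the printed identity $r(s;1-r_0)=r(s+T;r_0)$ evaluated at $s=0$ would force $1-r_0=r_0$; the identity that the paper's proof actually establishes (its $\bar r(s)=r(s+T)$ with $\bar r(0)=1-r_0$ only makes sense with $T$ replaced by the half-period $s_1=T/2$) is $r(s;1-r_0)=r(s+\tfrac{T}{2};r_0)$, exactly as you propose to prove it.

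The one place where your outline falls short is item (3). Knowing that the range of $r$ tends to $(0,1)$ and that the curve lingers near $r\approx 0$ and $r\approx 1$ does not yet identify the limit set: with the normalization $r(0)=r_0$, $r'(0)=0$ there is no freedom to ``rotate'', and the radial excursions occur at the angles $\pm s_1(r_0)$, where $s_1(r_0)$ is the angular half-period. The claim that the limit is the straight segment $\{0\}\times(-1,1)$ (resp. $(-1,1)\times\{0\}$) is precisely the statement that $s_1(r_0)\to\frac{\pi}{2}$ as $r_0\to 0$ (resp. $r_0\nearrow 1$). This follows from the asymptotics of the half-period integral, for $r_0<\tfrac12$, $s_1(r_0)=\int_{r_0}^{1-r_0}\frac{r_0(1-r_0)\,dr}{r\sqrt{r^2(1-r)^2-r_0^2(1-r_0)^2}}$, whose contribution near the lower endpoint tends to $\frac{\pi}{2}$ while the contributions near $r=1$ and in the interior tend to $0$; your dominated-convergence plan is the right tool, but you never state this limit, and without it you only obtain collapse onto two unit radial segments at unspecified angles. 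To be fair, the paper's own two-line justification of (3) (the trajectory is asymptotic to the $v$-axis, hence $r\to 0$) is no more detailed than yours on this point.
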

 
 \begin{proof} We know that if $r_0=1/2$, then $r(s)=1/2$ for all $s\in I$ by a direct computation. Suppose now $r_0\not=1/2$.
 \begin{enumerate}
 \item We know that the point $P$ is a center of the ODE system \eqref{eqs}. Thus  all trajectories of the phase plane  that lie in the halfplane $(0,1)\times\r$ go around $P$. This proves that all solutions with $r_0<1$ are periodic. Moreover, all trajectories intersect  the $u$-axis only at two points. This proves that the solution $r=r(s)$ has a minimum (less than $1/2$) and a maximum (bigger than $1/2$) in each interval $[0,T)$ of its domain.  
 \item Consider the interval $[0,T)$ of the domain of $r=r(s)$. We know that $r'(0)=0$ and that there is a unique point $s_1\in (0,T)$ where $r'$ vanishes. From Eq. \eqref{eq2}, we have
 $$r'(s_1)(r(s_1)-1)=r_0(r_0-1).$$
 Then $r(s_1)=r_0$ or $r'(s_1)=1-r_0$. Since the first case is not possible by the phase portrait, then $r'(s_1)=1-r_0$. Moreover, from \eqref{eq2}, we have 
 $$r''(s_1)=\frac{2r_0-1}{r_0}.$$
 This proves that if $s=0$ is a minimum (resp. maximum) of $r(s)$, then $s=s_1$ is a maximum (resp. minimum). 
  
 Define the function $\bar{r}(s)=r(s+T)=r(s+T;r_0)$. Then it is immediate that $\bar{r}(s)$ satisfies \eqref{eq2} and the initial conditions at $s=0$ are $\bar{r}(0)=1-r_0$ and $\bar{r}'(0)=0$. This proves that $r(s+T;r_0)=r(s;1-r_0)$ by uniqueness of solutions of \eqref{eq2}-\eqref{ini}.  
 \item The last statement is consequence of the phase portrait. Indeed, if $r_0\to 0$, then the trajectory is asymptotic to the $v$-axis. This implies that $r\to 0$ hence that $\gamma$ is asymptotic to the segment $\{0\}\times (-1,1)$. If $r_0\to 1$ the argument is similar. We can also use the above property (2).
\end{enumerate}
 \end{proof}
 
  In Fig. \ref{fig2} we show two catenaries with $r_0=1/4$ and $r_0=3/4$. In particular, the corresponding functions $r(s;\frac14)$ and $r(s;\frac34)$ coincide after a translation on the parameter $s$ according (2) of Thm. \ref{t1}: see Fig. \ref{fig3}.

   \begin{figure}[hbtp]
 \begin{center}
\includegraphics[width=.3\textwidth]{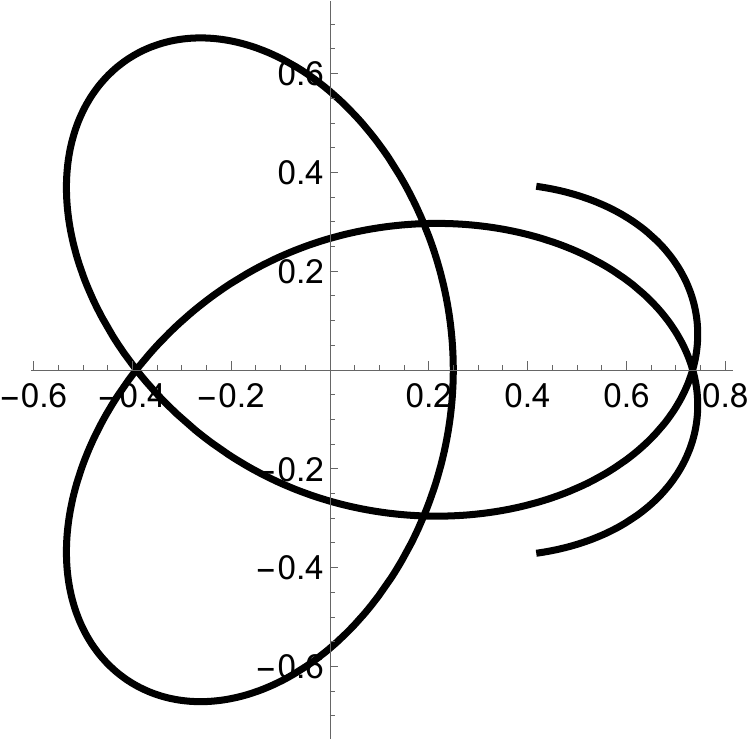}  \quad \includegraphics[width=.3\textwidth]{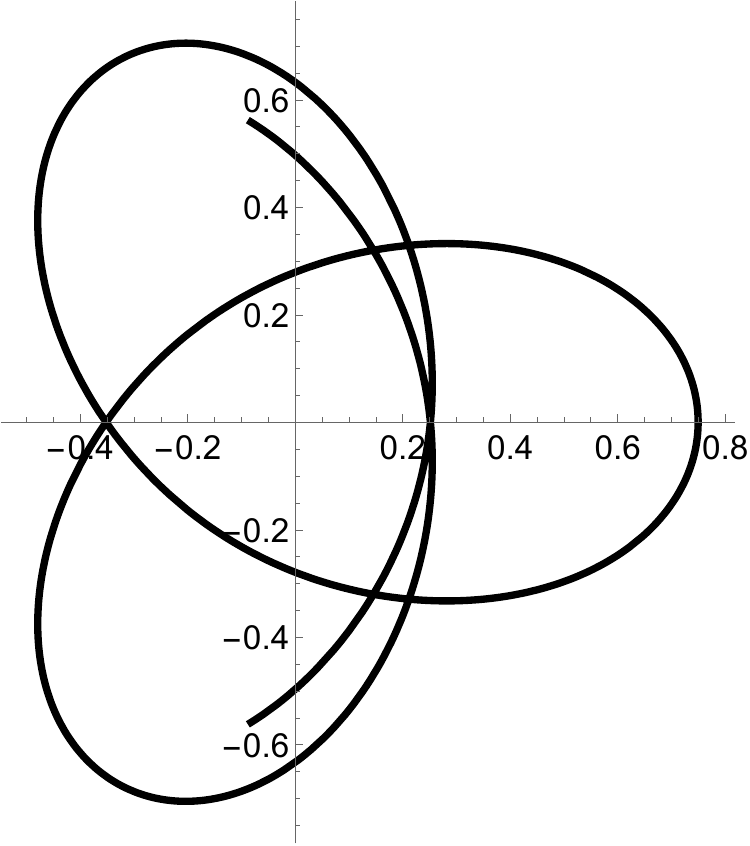}   
\end{center}
\caption{Solutions of \eqref{eq2}-\eqref{ini}. Here  $r(0)=1/4$ (left) and $r(0)=3/4$ (right). }
\label{fig2}
\end{figure}

   \begin{figure}[hbtp]
 \begin{center}
\includegraphics[width=.3\textwidth]{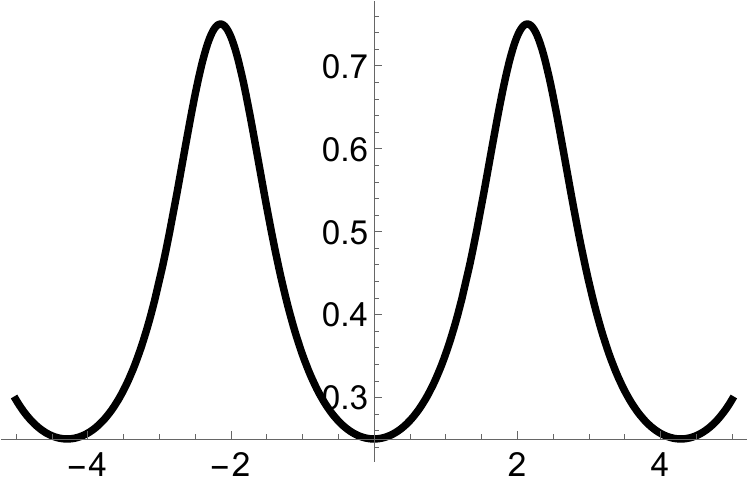}  \quad \includegraphics[width=.3\textwidth]{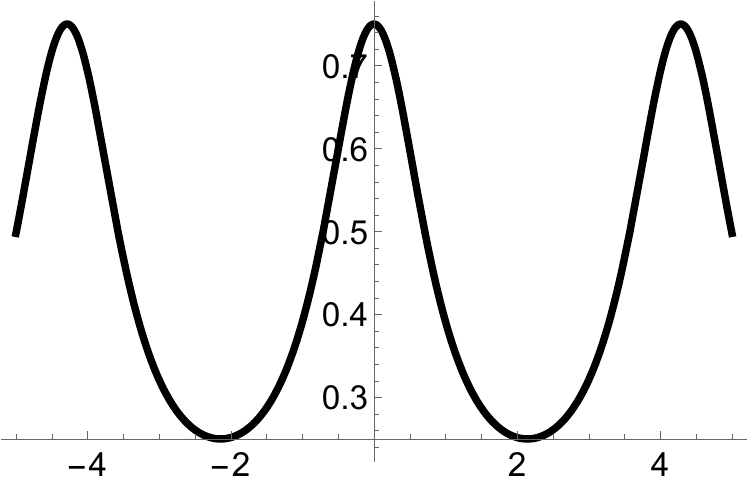}   
\end{center}
\caption{Graphics of the function $r=r(s)$ of \eqref{eq2}-\eqref{ini}. Here  $r(0)=1/4$ (left) and $r(0)=3/4$ (right). }
\label{fig3}
\end{figure}

An interesting problem  is on the existence of  catenaries that are closed curves. The fact that the function $r(s)$ is periodic and, consequently, also its curvature $\kappa(s)$, is a necessary condition for the curve to be closed, but not sufficient. The problem when a periodic function is the curvature of a closed planar curve is difficult to solve in all its generality: see an interesting discussion and some results in \cite{ar}.

We now consider the case $r_0>1$. We will se that the catenaries are radial graphs on sub-arcs of $\s^1$, that is, each ray from the origin intersects the catenary at one point at most. 
 
 \begin{theorem}[Case $r_0>1$]\label{t2}
 Let $r=r(s)$ a solution of \eqref{eq2}-\eqref{ini}. Suppose $r_0>1$. Then $r$  satisfies the following properties:
 \begin{enumerate}
 \item The function $r(s)$ has only a critical point, namely, $s=0$, which it is a minimum.
 \item The function $r$ is convex.
  \item The domain of the function $r(s)$  is a bounded interval $(-s_1,s_1)$, $s_1<\frac{\pi}{2}$. The catenary $\gamma$ is a radial graph   on the sub-arc $(-s_1,s_1)$ of $\s^1_+:=\{(x,y)\in\s^1:x>0\}$. Moreover, $r$ has  two vertical asymptotes at the points $s=\pm s_1$, that is, $\gamma$ is asymptotic   to the two rays from the origin at angles $\pm s_1$.

 \end{enumerate}
 \end{theorem}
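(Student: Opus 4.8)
The plan is to combine the first integral \eqref{eqa} of Proposition~\ref{pra} with the qualitative picture of the phase portrait (Fig.~\ref{figphase}), using throughout that for $r_0>1$ the trajectory stays in the region $u>1$. Write $k:=r_0(r_0-1)>0$. First I would record two preliminary facts. Since solutions of \eqref{eq2} never meet $r=1$ and $r(0)=r_0>1$, the intermediate value theorem gives $r(s)>1$ on the whole maximal interval $I$ of existence. And on the part of $I$ where $r>r_0$ one has $r^2(r-1)^2=\big(r(r-1)\big)^2>k^2$, so \eqref{eqa} yields $r'=\dfrac{r}{k}\sqrt{r^2(r-1)^2-k^2}$ for $s>0$ (and minus this for $s<0$).

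For (1) and (2) I would solve \eqref{eq2} for $r''$, obtaining on $I$ the expression $r''=\big(r'^2(3r-2)+r^2(2r-1)\big)/\big(r(r-1)\big)$, whose right-hand side is strictly positive because $r>1$. Hence $r$ is strictly convex, which is (2); and a strictly convex function with $r'(0)=0$ has $s=0$ as its only critical point, necessarily the global minimum, which is (1). In particular $r$ is strictly increasing for $s>0$ and, by Proposition~\ref{pr-s}, strictly decreasing for $s<0$, and $r(s)>r_0$ whenever $s\neq0$, so the formula for $r'$ above is valid on all of $I\setminus\{0\}$.

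For (3) I would change the independent variable from $s$ to $r$ on $I\cap(0,\infty)$, where $r$ is an increasing bijection onto $(r_0,L)$ with $L:=\sup_I r\in(r_0,+\infty]$; this gives $\sup\big(I\cap(0,\infty)\big)=\int_{r_0}^{L}\frac{k\,dr}{r\sqrt{r^2(r-1)^2-k^2}}$. The integrand is $\sim C\,(r-r_0)^{-1/2}$ near $r_0$ and $\sim k\,r^{-3}$ near $\infty$, so the integral converges whatever $L$ is; a standard continuation argument (if $L<\infty$ then $r$ and $r'$ would have finite limits and the solution could be extended) then forces $L=+\infty$, identifies the maximal interval as $(-s_1,s_1)$ with $s_1=\int_{r_0}^{\infty}\frac{k\,dr}{r\sqrt{r^2(r-1)^2-k^2}}$, and shows $r(s)\to+\infty$ as $s\to\pm s_1$. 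Granting $s_1<\pi/2$, the remaining assertions of (3) follow at once: since $\gamma(s)=r(s)(\cos s,\sin s)$ with $s$ the polar angle and $|s|<s_1<\pi/2$, the map $s\mapsto(\cos s,\sin s)$ is injective with image in $\s^1_+$, so $\gamma$ is a radial graph over that arc, and $r\to\infty$ at the endpoints gives the vertical asymptotes and the asymptotic rays at angles $\pm s_1$. (With a little more care, $s_1-s=\int_{r(s)}^{\infty}\frac{k\,dr}{r\sqrt{r^2(r-1)^2-k^2}}\sim \dfrac{k}{2\,r(s)^2}$, so $r(s)(s_1-s)\to0$ and the distance from $\gamma(s)$ to the ray, which is $r(s)|\sin(s-s_1)|\le r(s)(s_1-s)$, tends to $0$; similarly at $-s_1$.)

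The step I expect to be the main obstacle is the sharp bound $s_1<\pi/2$. My plan for it is the substitution $\rho=r(r-1)=r^2-r$, which is increasing for $r>1$ and maps $(r_0,\infty)$ onto $(k,\infty)$ with $d\rho=(2r-1)\,dr$; it turns the integral into $s_1=\int_{k}^{\infty}\frac{k\,d\rho}{r(2r-1)\,\sqrt{\rho^2-k^2}}$. The key identity is $r(2r-1)=2r^2-r=(r^2-r)+r^2=\rho+r^2>\rho$, which makes the integrand strictly smaller than $k/(\rho\sqrt{\rho^2-k^2})$; since $\int_{k}^{\infty}\frac{k\,d\rho}{\rho\sqrt{\rho^2-k^2}}=\frac{\pi}{2}$ (elementary, via $\rho=k\sec u$), we get $s_1<\pi/2$. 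Everything else being routine, the crux is to spot this change of variable, which exposes the comparison with the model integral whose value is exactly $\pi/2$.
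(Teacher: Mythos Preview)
Your argument is correct and follows the paper's overall architecture: items (1) and (2) come from the explicit formula $r''=\big(r'^2(3r-2)+r^2(2r-1)\big)/\big(r(r-1)\big)>0$ exactly as in the paper, and item (3) is deduced from the first integral \eqref{eqa} via a comparison estimate. You are also more careful than the paper about the continuation argument forcing $L=+\infty$ and about the precise asymptotics $r(s)(s_1-s)\to 0$ showing the rays are genuine asymptotes.

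The one substantive divergence is the comparison used to obtain $s_1<\pi/2$. The paper simply uses $r-1\ge r_0-1$ to bound
\[
r^2(r-1)^2-r_0^2(r_0-1)^2\;\ge\;(r_0-1)^2(r^2-r_0^2),
\]
which reduces $s_1$ to the elementary integral $\int_{r_0}^{\infty}\frac{dr}{r\sqrt{r^2-r_0^2}}=\frac{\pi}{2r_0}$, giving $s_1\le\pi/2$ in one stroke. Your route, the change of variable $\rho=r(r-1)$ together with $r(2r-1)=\rho+r^2>\rho$ and $\int_k^{\infty}\frac{k\,d\rho}{\rho\sqrt{\rho^2-k^2}}=\pi/2$, is a different but equally elementary comparison. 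What the paper's choice buys is that it avoids any substitution and makes the $\arctan$ appear directly; what yours buys is a strict inequality (since $r^2>0$) and a comparison structure that would adapt verbatim to the $\alpha$-generalization in \S\ref{s3}, where the natural variable is $\rho=r(r-1)^{\alpha}$.
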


\begin{proof} 
\begin{enumerate}
\item We know by the phase portrait that the trajectories in the subdomain $(1,\infty)\times\r$ are curves which are graphs on the $v$-axis. We know by Prop. \ref{pr-s} that   $r$ is symmetric about the line $s=0$. Moreover $s=0$ is a critical point, in fact, a minimum because $r''(0)=\frac{r_0(2r_0-1)}{r_0-1}>0$.  
 \item  For the convexity, from \eqref{eq2} we obtain
  $$r''=\frac{r'^2(3r-2)+r^2(2r-2)}{r(r-1)}.$$
  The denominator and numerator are positive because $r>1$. This proves $r''>0$ everywhere, as desired. \item We prove that  $r(s)$ is defined in a bounded interval $(-s_1,s_1)$. Following \eqref{fi},  we can be assume $c_1=0$ by translating  the parameter $s$ of $\gamma$. 
 Since the function $r$ is increasing for $s>0$, then we have 
 $$\frac{s}{r_0(r_0-1)}=\int_{r_0}^r\frac{dr}{r\sqrt{r^2(r-1)^2-r_0^2(r_0-1)^2}}.$$
 We know that $r-1\geq r_0-1$ because $r(s)>r_0$ for all $s$, $s\not=0$. Therefore
\begin{align*}
\frac{s}{r_0(r_0-1)}&\leq\frac{1}{r_0-1}\int_{r_0}^r\frac{dr}{r \sqrt{r^2 -r_0^2 }}=\frac{1}{r_0(r_0-1)}\tan^{-1}(\frac{\sqrt{r^2-r_0^2}}{r_0})\\
&\leq \frac{\pi}{2r_0(r_0-1)}.
\end{align*}
 Thus $s<\frac{\pi}{2}$. This proves that the domain of the function $r(s)$ is included in the interval $(-\frac{\pi}{2},\frac{\pi}{2})$.   Since the trajectories in the phase portrait goes to $\pm\infty$, this means that the limit of $r'$ is $\infty$. Thus $r$ goes to $\infty$ at the ends $\pm s_1$ of its domain.  
\end{enumerate}
\end{proof}

In Fig. \ref{fig4} we show some pictures of catenaries on $\s^1$ for several values $r_0>1$.   

 \begin{figure}[hbtp]
 \begin{center}
\includegraphics[width=.15\textwidth]{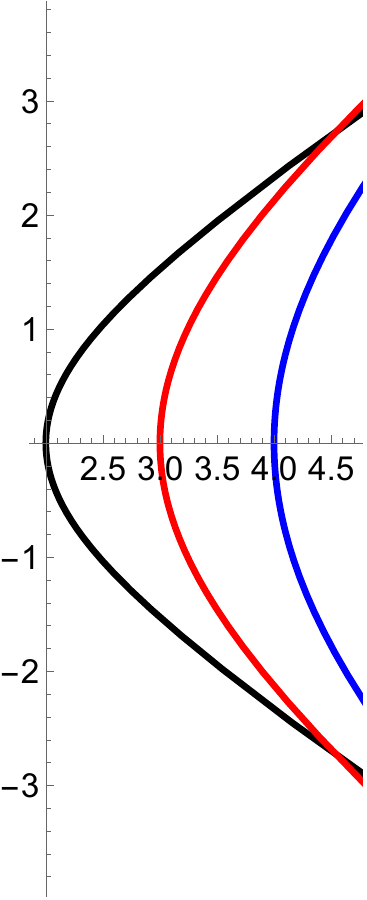}  
\end{center}
\caption{Solutions of \eqref{eq2}. Here  $r(0)=2$ (black), $r(0)=3$ (red) and $r(0)=4$ (blue). }
\label{fig4}
\end{figure}

\section{Generalization to other energies}\label{s3}

We generalize the hanging chain problem with respect to a circle   by considering powers of the distance to $\s^1$. Let $\alpha\in\r$. For a planar curve $\gamma$ given in polar coordinates $r=r(s)$, define the functional $E_\alpha$ as 
\begin{equation}\label{e2}
E_\alpha[\gamma]= \int_{s_1}^{s_2} |r-1|^\alpha \sqrt{r^2+r'^2}\, ds.
\end{equation}
If $\alpha=0$ then $E_0$  represents the length of the curve and it is known that the extremals are the geodesics of $\r^2$, that is, straight-lines. This case will be discarded. 

 The purpose in this section is, again, to give  a geometric description of the extremals of $E_\alpha$. Since in many of the following results, the proofs are analogous as in Sect. \ref{s2}, we will omit the details if necessary.  Extremals of $E_\alpha$ are calculated   as the case $\alpha=1$. In the next result we find the Euler-Lagrange equation as well as the analogous of Prop. \ref{pr21}.

\begin{proposition} A planar curve $r=r(s)$ is an extremal  of $E_\alpha$ if and only if 
\begin{equation}\label{sol2}
r(r-1)r''=\left((\alpha+2)r-2\right)r'^2+\left((\alpha+1)r-1\right)r^2.
\end{equation}
This equation is equivalent to 
\[
\kappa=\alpha\frac{\cos\varphi}{r-1},
\]
where $\varphi$ is the angle between the unit normal  $N$ of $\gamma$ with the opposite of the vector $\gamma$.
\end{proposition}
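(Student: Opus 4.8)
The plan is to reproduce, with the exponent $\alpha$ in place of $1$, the arguments of Propositions~\ref{pr1} and~\ref{pr21}. Writing the curve in polar coordinates, the Lagrangian of the full energy is $\mathcal{L}(r,r',\theta,\theta')=|r-1|^\alpha\sqrt{r^2\theta'^2+r'^2}$, which again does not depend on $\theta$; so the first Euler--Lagrange equation yields the conservation law $r^2|r-1|^\alpha\theta'/\sqrt{r^2\theta'^2+r'^2}=c$, and exactly the dichotomy in the proof of Proposition~\ref{pr1} applies: either $\gamma$ is a ray from the origin (the $c=0$ branch, using once more that $r\equiv 1$ is impossible because the second Euler--Lagrange equation then forces $\gamma$ to be a point), or $\theta'$ never vanishes and $\gamma$ is a radial graph $\gamma(s)=r(s)(\cos s,\sin s)$. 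I would state this reduction in one line, since it is verbatim the earlier proof, and then work with the reduced one‑variable functional \eqref{e2}, whose Lagrangian is $\mathcal{L}(r,r')=|r-1|^\alpha\sqrt{r^2+r'^2}$.

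The core of the proof is the remaining Euler--Lagrange equation $\partial_r\mathcal{L}=\frac{d}{ds}\,\partial_{r'}\mathcal{L}$. Assume first $r-1>0$, so that $|r-1|^\alpha=(r-1)^\alpha$ is smooth (the case $r-1<0$ being identical with $(1-r)^\alpha$, the extra minus signs cancelling). A direct computation gives
$$\partial_r\mathcal{L}=(r-1)^{\alpha-1}\,\frac{\alpha(r^2+r'^2)+r(r-1)}{\sqrt{r^2+r'^2}},\qquad \partial_{r'}\mathcal{L}=(r-1)^\alpha\,\frac{r'}{\sqrt{r^2+r'^2}}.$$
Differentiating the second expression in $s$, subtracting, multiplying through by $(r^2+r'^2)^{3/2}(r-1)^{1-\alpha}$ and collecting the powers of $r$, $r'$, $r''$ should produce exactly \eqref{sol2}; specializing $\alpha=1$ must recover \eqref{eq2}, which is a convenient sanity check on the algebra. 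Conversely, any $r(s)$ solving \eqref{sol2} satisfies the Euler--Lagrange equation, giving the "if and only if".

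For the curvature reformulation I would argue as in Proposition~\ref{pr21}: recall the polar expression $\kappa=(2r'^2+r^2-rr'')/(r^2+r'^2)^{3/2}$, solve \eqref{sol2} for $rr''$, substitute and simplify to get $\kappa=\dfrac{\alpha r}{(r-1)\sqrt{r^2+r'^2}}$. The formulas for the tangent and unit normal $N$ of $\gamma(s)=r(s)(\cos s,\sin s)$, the identity $\langle N,\gamma\rangle=-r^2/\sqrt{r^2+r'^2}$, and hence the value of $\cos\varphi$ in terms of $r$ and $r'$ (with $|\gamma|=r$), were all established in the proof of Proposition~\ref{pr21} and do not involve $\alpha$; combining them with the expression for $\kappa$ yields $\kappa=\alpha\cos\varphi/(r-1)$, recovering \eqref{an} when $\alpha=1$.

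I expect no conceptual obstacle: the only genuine subtlety is the meaning and differentiability of $|r-1|^\alpha$ for non-integer $\alpha$, which forces one to carry out the computation separately on each region $r>1$ and $r<1$, where $(r-1)^\alpha$ respectively $(1-r)^\alpha$ is a smooth positive function — this is consistent with, and in fact anticipates, the later statement that extremals of $E_\alpha$ cannot meet $\s^1$; moreover for $\alpha<1$ the factor $(r-1)^{\alpha-1}$ is singular at $r=1$, reinforcing that the value $r=1$ must be excluded from the outset, just as in Section~\ref{s2}. Apart from this, the main effort is purely bookkeeping: keeping the several terms of $\frac{d}{ds}\partial_{r'}\mathcal{L}$ organized until the common factor $(r-1)^{\alpha-1}$ cancels.
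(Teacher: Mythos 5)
Your proposal is correct and follows essentially the route the paper intends: the paper omits this proof precisely because it is the computation of Propositions~\ref{pr1} and~\ref{pr21} repeated with the exponent $\alpha$, which is exactly what you carry out, and your intermediate formulas (the conservation law from $\theta$-independence, the expressions for $\partial_r\mathcal{L}$ and $\partial_{r'}\mathcal{L}$, and $\kappa=\alpha r/\big((r-1)\sqrt{r^2+r'^2}\big)$ combined with $\cos\varphi$ as in Prop.~\ref{pr21}) do reduce to \eqref{sol2} and to $\kappa=\alpha\cos\varphi/(r-1)$, recovering \eqref{eq2} and \eqref{an} at $\alpha=1$. Your remark about treating $r>1$ and $r<1$ separately is consistent with the paper's handling of $|r-1|$ in Section~\ref{s2}.
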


As in the case $\alpha=1$, the solutions of \eqref{sol2} do not cross the origin of $\r^2$ neither the circle $\s^1$. 

 If $u=r$ and $v=r'$, then \eqref{sol2} is equivalent to the ODE system
\begin{equation}\label{eqs2}
\left(\begin{array}{l}u\\ v\end{array}\right)'= \left(\begin{array}{c}v\\ \dfrac{u((\alpha+1)u-1)}{u-1}+ \dfrac{(\alpha+2)u-2}{u(u-1)}v^2\end{array}\right)
\end{equation}
The phase plane is $A=\{(u,v)\in\r^2:u\in (0,1)\cup (1,\infty), v\in\r\}$. Equilibrium points only appear when $\alpha>-1$, begin this point  $P_\alpha=(\frac{1}{1+\alpha},0)$ if $\alpha>-1$; otherwise, there are not. The equilibrium point $P_\alpha$ corresponds with the constant solution $r(s)=\frac{1}{1+\alpha}$, that is, a circle centered at the origin of   radius $\frac{1}{1+\alpha}$. The linearization of \eqref{eqs2} at $P_\alpha$ is  
$$\left(\begin{array}{ll}0&1\\ -\frac{\alpha+1}{\alpha}&0\end{array}\right).$$
The eigenvalues are two purely imaginary distinct numbers if $\alpha>0$ or two real distinct number if $\alpha\in (-1,0)$. Thus we have (see Fig. \ref{fig5}):
\begin{enumerate}
\item If $\alpha>0$, then $P_\alpha$ is a center.
\item If $\alpha\in (-1,0)$, then $P_\alpha$ is an unstable saddle point.
\end{enumerate}
   \begin{figure}[hbtp]
 \begin{center}
\includegraphics[width=.42\textwidth]{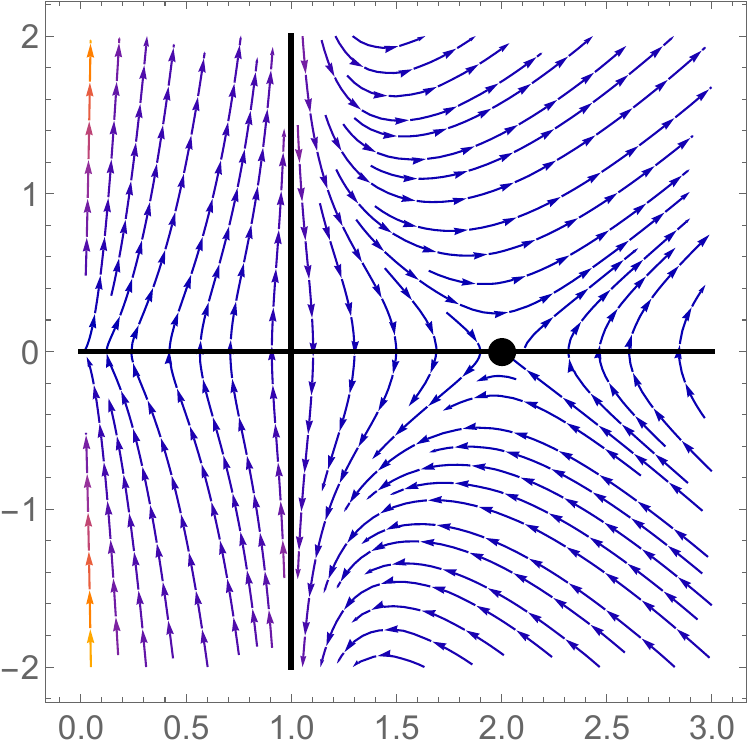}  \qquad \includegraphics[width=.42\textwidth]{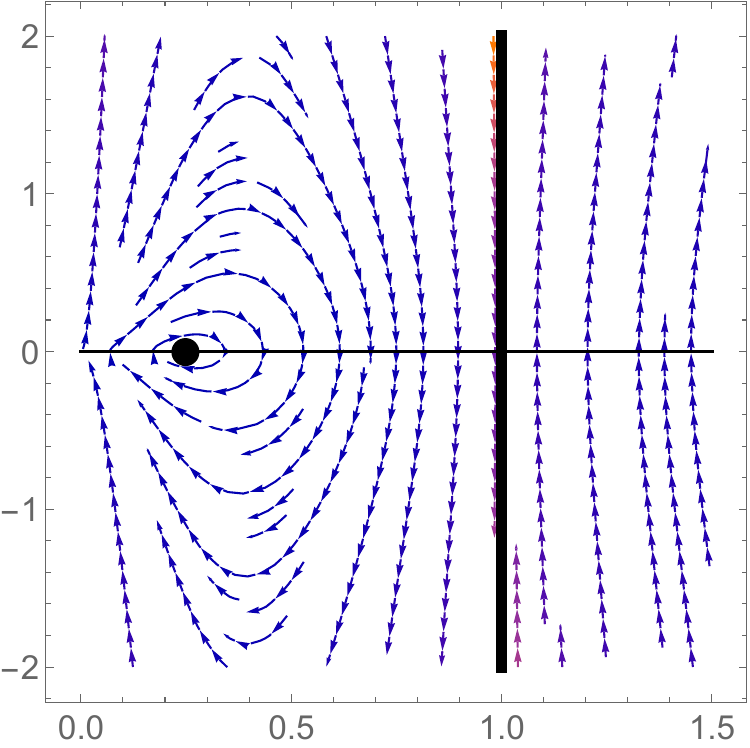}   
\end{center}
\caption{Phase portrait of the ODE system \eqref{eqs2}: case $\alpha=-\frac12$ (left) and $\alpha=3$ (right).   }
\label{fig5}
\end{figure}

Proposition \ref{pra}  cannot be easily generalized   because of the arbitrariness of $\alpha$. When $\alpha$ is an integer,  it is possible to get an expression similar to \eqref{fi}.  

\begin{proposition} 
Let $r=r(s)$ be a solution of \eqref{sol2}. If $\alpha\in\mathbb {Z}$, then there is a first integral of Eq. \eqref{sol2}. In the particular case $\alpha\in\mathbb{Z}^+$, a first integral of \eqref{sol2}-\eqref{ini} is   
\begin{equation}\label{eqa2}
  r'^2r_0^2(r_0-1)^{2\alpha}=r^2(r^2(r-1)^{2\alpha}-r_0^2(r_0-1)^{2\alpha}).
  \end{equation}  
\end{proposition}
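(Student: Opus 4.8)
The plan is to reproduce the proof of Proposition~\ref{pra} almost verbatim, exploiting the order reduction available for the autonomous equation \eqref{sol2}. First I would set $w=r'$, regarded as a function $w=w(r)$, and put $p=w^2$; since $r''=w\,\frac{dw}{dr}=\frac12\,\frac{dp}{dr}$, equation \eqref{sol2} becomes the \emph{linear} first order equation
\[
r(r-1)\,p'=2\bigl((\alpha+2)r-2\bigr)\,p+2\bigl((\alpha+1)r-1\bigr)r^2 ,
\]
where here the prime denotes $d/dr$.

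Next I would solve this linear equation with an integrating factor. The partial fraction decomposition $\dfrac{2\bigl((\alpha+2)r-2\bigr)}{r(r-1)}=\dfrac{4}{r}+\dfrac{2\alpha}{r-1}$ gives the integrating factor $\mu(r)=r^{-4}(r-1)^{-2\alpha}$. When $\alpha\in\mathbb Z$ this $\mu$ is a rational function of $r$ — the exponent $2\alpha$ is an even integer, so there is no sign ambiguity in $(r-1)^{-2\alpha}$, and for $\alpha<0$ one merely gets a polynomial times $r^{-4}$ — and the equation takes the form
\[
\bigl(\mu(r)\,p\bigr)'=\frac{2\bigl((\alpha+1)r-1\bigr)}{r^{3}(r-1)^{2\alpha+1}} ,
\]
whose right hand side is again rational in $r$. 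Since every rational function admits an elementary antiderivative $G(r)$, one obtains $\mu(r)\,r'^2=G(r)+c_0$ for some $c_0\in\r$; hence $\mu(r)\,r'^2-G(r)$ is constant along every solution of \eqref{sol2}, which is the asserted first integral.

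For $\alpha\in\mathbb Z^+$ I would identify $G$ explicitly. A direct differentiation yields
\[
\frac{d}{dr}\!\left(-\frac{1}{r^{2}(r-1)^{2\alpha}}\right)=\frac{2\bigl((\alpha+1)r-1\bigr)}{r^{3}(r-1)^{2\alpha+1}} ,
\]
so that $\mu(r)\,p=c-\dfrac{1}{r^{2}(r-1)^{2\alpha}}$ for some constant $c\in\r$, i.e. $r'^2=r^{2}\bigl(c\,r^{2}(r-1)^{2\alpha}-1\bigr)$. Imposing the initial conditions \eqref{ini}, $r(0)=r_0$ and $r'(0)=0$, forces $c=\dfrac{1}{r_0^{2}(r_0-1)^{2\alpha}}$, and substituting this value gives exactly \eqref{eqa2}.

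The computations are entirely routine; the only substantive point is the claim for general $\alpha\in\mathbb Z$, which relies on the elementary integrability of rational functions — for non-integer $\alpha$ the integrating factor $\mu$ carries a fractional power of $r-1$ and this device no longer yields a closed-form first integral. For $\alpha\in\mathbb Z^+$ the antiderivative reduces to the single rational term $-1/\bigl(r^{2}(r-1)^{2\alpha}\bigr)$, with no logarithmic contribution, which is precisely what makes the explicit identity \eqref{eqa2} available.
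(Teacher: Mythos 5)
Your proof is correct and follows essentially the same route as the paper: both reduce \eqref{sol2} to the linear first-order equation for $p=r'^2$ as a function of $r$, and your integrating factor $\mu(r)=r^{-4}(r-1)^{-2\alpha}$ is exactly the reciprocal of the paper's Bernoulli factor $f(r)=r^4(r-1)^{2\alpha}$, so the equation you integrate is the paper's \eqref{gg} and the explicit antiderivative, constant determination, and identity \eqref{eqa2} coincide.
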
 

\begin{proof}
 Repeating the proof of Prop. \ref{pra}, we have $v(r)=f(r)g(r)$, where 
$$f(r)g'(r)+g(r)\left(f'-2\frac{(\alpha+2)r-2}{r(r-1)} \right) =2\frac{r((\alpha+1)r-1)}{r-1}.$$
The function $f$ is obtained by vanishing the parenthesis in the left hand-side of this equation. This yields $f(r)=r^4(r-1)^{2\alpha}$. Once we know the function $f$, the function $g$ satisfies the ODE
\begin{equation}\label{gg}
g'(r)=2\frac{(\alpha+1)r-1}{r^3(r-1)^{2\alpha+1}}.
\end{equation}
If $\alpha\in\mathbb{Z}$, then we can apply standard methods of integration because the denominator in the right hand-side of \eqref{gg} is a quotient of polynomials and the denominator  can be decomposed in factors with integer coefficients. This proves the existence of a first integral. In the particular case   $\alpha\in\mathbb{Z}^+$,      the solution of \eqref{gg} is  
$$g(r)=c-\frac{1}{r^2(r-1)^{2\alpha}}.$$
After imposing the initial conditions,  we obtain   \eqref{eqa2}
  \end{proof}
  
  \begin{remark} If $\alpha\in\mathbb{Z}^-$, it is possible to solve \eqref{gg} being difficult to find a general formula. In such a case, we have 
  $$g(r)=c-\frac{1}{r^2}-\frac{2\alpha}{r}+P(r),$$
  where $P(r)$ is a polynomial of degree $-2\alpha$. Some particular examples are the following
  \begin{enumerate}
  \item If $\alpha=-1$, then $P(r)=0$.
  \item If $\alpha=-2$, then $P(r)=4r-r^2$.
  \item If $\alpha=-3$, then $ P(r)=20r-15r^2+6r^3-r^4$.
  \end{enumerate}
  \end{remark}

  We begin to analyze the solutions of \eqref{sol2} depending on the value of $\alpha$.   The first case to consider is $\alpha>0$. By the phase portrait, this case is similar to $\alpha=1$ and the proofs are similar. If $r_0>1$, we need a first integral of Eq. \eqref{sol2}. This integral was obtained when $\alpha\in\mathbb{Z}^+$. For this reason, in the next theorem we will assume under that situation that $\alpha$ is an integer. However, numerical computations show that the result is still valid for any $\alpha>0$,  regardless of whether or not $\alpha$ is integer. 
  
   \begin{theorem} \label{t3}
 Let $\alpha>0$. Let $r=r(s)$ by a solution of \eqref{sol2}-\eqref{ini}. The only  constant solution is $r(s)=\frac{1}{1+\alpha}$ and the catenary is a circle of radius $\frac{1}{1+\alpha}$. In other case, the function $r(s)$ satisfies the following properties:
 \begin{enumerate}
 \item Suppose $r_0<1$ with $r_0\not=\frac{1}{1+\alpha}$. Then the function $r(s)$ is periodic. If $T>0$  denotes its period, then in each interval $[0,T)$ there is exactly one minimum, which is less than $\frac{1}{1+\alpha}$, and one maximum, which is bigger than $\frac{1}{1+\alpha}$.
  \item Suppose $r_0>1$.    Then the function $r(s)$   is a convex symmetric  graph with only a critical point, namely, $s=0$, which it is a minimum. If $\alpha\in\mathbb{Z}^+$, then the domain of the function $r(s)$  is a bounded interval $(-s_1,s_1)$ with $s_1\leq\frac{\pi}{2}$. The curve $\gamma$ is a radial graph on the sub-arc $(-s_1,s_1)$  and asymptotic to the   two     rays  $s=\pm s_1$. 
 
 \end{enumerate}
 \end{theorem}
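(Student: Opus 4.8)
The plan is to mirror the treatment of Sect.~\ref{s2} as closely as possible, introducing only the ingredients that change when the exponent $1$ is replaced by a general $\alpha>0$. First I would dispose of the constant case: setting $r'=r''=0$ in \eqref{sol2} leaves $((\alpha+1)r-1)r^2=0$, and since $r=0$ is excluded this forces $r=\frac{1}{1+\alpha}$, the circle of that radius. Symmetry of every non-constant solution of \eqref{sol2}--\eqref{ini} about $s=0$ follows verbatim from the argument for Prop.~\ref{pr-s}: $\bar r(s)=r(-s)$ solves the same initial value problem, so uniqueness gives $\bar r=r$. I would then split into the regimes $r_0<1$ and $r_0>1$, driven by the phase portrait of \eqref{eqs2} (Fig.~\ref{fig5}, right) together with a first integral. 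Although the computation leading to \eqref{eqa2} only extracts it for $\alpha\in\mathbb{Z}^+$, the antiderivative $g(r)=c-\frac{1}{r^2|r-1|^{2\alpha}}$ of \eqref{gg} is valid for every real $\alpha$ (equivalently, one may invoke the Beltrami identity, the Lagrangian of $E_\alpha$ having no explicit $s$-dependence), so \eqref{eqa2} holds whenever $\alpha>0$ with $(r-1)^{2\alpha}$ read as $|r-1|^{2\alpha}$. Writing $\phi(r)=r^2|r-1|^{2\alpha}$, this says $r'^2=r^2\bigl(\phi(r)-\phi(r_0)\bigr)/\phi(r_0)$, and that identity is the engine for everything below.

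For $r_0<1$ I would begin by studying $\phi$ on $(0,1)$: a short computation gives $\phi'(r)=2r|r-1|^{2\alpha-1}\bigl(1-(1+\alpha)r\bigr)$, so $\phi$ is unimodal there, with a single maximum at $r=\frac{1}{1+\alpha}$ and $\phi(0^+)=\phi(1^-)=0$. If $r_0\neq\frac{1}{1+\alpha}$, the equation $\phi(r)=\phi(r_0)$ has exactly two roots $a<\frac{1}{1+\alpha}<b$ in $(0,1)$, one of which is $r_0$, with $\phi'(a),\phi'(b)\neq0$. Since $r'^2\ge0$ forces $\phi(r)\ge\phi(r_0)$, the solution stays confined to $[a,b]\subset(0,1)$ --- in particular it never reaches $r=0$ or $r=1$ --- its derivative vanishes exactly at $a$ and $b$, and it oscillates between them; the period $T=2\int_a^b\frac{\sqrt{\phi(r_0)}}{r\sqrt{\phi(r)-\phi(r_0)}}\,dr$ is finite because the boundary singularities are integrable ($\phi'\neq0$ at $a,b$). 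This yields periodicity together with exactly one minimum $a<\frac{1}{1+\alpha}$ and one maximum $b>\frac{1}{1+\alpha}$ in each interval $[0,T)$, in agreement with $P_\alpha$ being a center in Fig.~\ref{fig5}.

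For $r_0>1$, convexity drops out of \eqref{sol2} rewritten as $r''=\bigl(((\alpha+2)r-2)r'^2+((\alpha+1)r-1)r^2\bigr)/\bigl(r(r-1)\bigr)$: when $r>1$ and $\alpha>0$ every factor on the right is positive, so $r''>0$ everywhere; moreover $r''(0)=r_0((\alpha+1)r_0-1)/(r_0-1)>0$, so the unique critical point $s=0$ is a minimum, consistent with the phase portrait, where the trajectories in $(1,\infty)\times\r$ are graphs over the $v$-axis running from $v=-\infty$ to $v=+\infty$. For the bounded-domain statement I would take $\alpha\in\mathbb{Z}^+$ so that \eqref{eqa2} holds literally, and argue as in Thm.~\ref{t2}: for $s>0$ the function $r$ is increasing, and using $(r-1)^{2\alpha}\ge(r_0-1)^{2\alpha}$,
\begin{align*}
\frac{s}{r_0(r_0-1)^\alpha}&=\int_{r_0}^{r}\frac{dr}{r\sqrt{r^2(r-1)^{2\alpha}-r_0^2(r_0-1)^{2\alpha}}}\\
&\le\frac{1}{(r_0-1)^\alpha}\int_{r_0}^{r}\frac{dr}{r\sqrt{r^2-r_0^2}}=\frac{1}{r_0(r_0-1)^\alpha}\arctan\frac{\sqrt{r^2-r_0^2}}{r_0}\le\frac{\pi}{2r_0(r_0-1)^\alpha},
\end{align*}
so the maximal interval is $(-s_1,s_1)$ with $s_1\le\frac{\pi}{2}$. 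Since the solution cannot converge to a finite value as $s\to s_1^-$ (otherwise it would extend past $s_1$), we get $r\to\infty$ there, so $\gamma(s)=r(s)(\cos s,\sin s)$ is a radial graph over $(-s_1,s_1)$ asymptotic to the two rays at angles $\pm s_1$.

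The step I expect to be the real obstacle is the rigorous treatment when $\alpha$ is not an integer. For $r_0<1$ this is absorbed by the first integral above, which still produces $\phi$ and hence the whole level-set analysis, but one must be careful that $|r-1|^{2\alpha}$ is smooth away from $r=1$ and that the orbit genuinely closes --- that is, that the linear center $P_\alpha$ is a true nonlinear center; I would derive this from the reversibility $(u,v,s)\mapsto(u,-v,-s)$ of \eqref{eqs2} combined with the unimodality of $\phi$. For $r_0>1$ with non-integer $\alpha$ the comparison estimate leans on having a tractable closed form for the integrand, so I would not attempt the bounded-domain bound in that case and would simply record, as the paper already does, that numerical evidence supports it.
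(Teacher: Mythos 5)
Your proposal is correct, and for part (1) it takes a genuinely different (and in fact more self-contained) route than the paper. The paper disposes of the case $r_0<1$ by declaring it ``similar to $\alpha=1$'', i.e.\ by appealing to the phase portrait of \eqref{eqs2} and the center at $P_\alpha$ obtained from the linearization, and it only writes out the comparison estimate for $r_0>1$, $\alpha\in\mathbb{Z}^+$, using \eqref{eqa2}. You instead observe that the antiderivative $g(r)=c-\frac{1}{r^2|r-1|^{2\alpha}}$ of \eqref{gg} (equivalently, the Beltrami identity, since the Lagrangian has no explicit $s$-dependence) yields the first integral $r'^2\,\varphi(r_0)=r^2\bigl(\varphi(r)-\varphi(r_0)\bigr)$ with $\varphi(r)=r^2|r-1|^{2\alpha}$ for \emph{every} real $\alpha>0$, not just integers; the level-set analysis of the unimodal function $\varphi$ on $(0,1)$ then gives confinement to $[a,b]$, exactly two critical values per period straddling $\frac{1}{1+\alpha}$, and finiteness of the period from integrability of the endpoint singularities. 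This buys a rigorous proof that $P_\alpha$ is a true nonlinear center (the linearization alone, which is what the paper's phase portrait rests on, does not formally give this), at the cost of a slightly longer argument. For part (2) your treatment coincides with the paper's: the same convexity computation from \eqref{sol2}, symmetry as in Prop.\ \ref{pr-s}, and the same comparison integral bounding $s_1$ by $\frac{\pi}{2}$, with the blow-up $r\to\infty$ at $\pm s_1$ justified by the standard continuation argument. One further remark: since your first integral is valid for all $\alpha>0$ with $|r-1|^{2\alpha}$, the very same comparison estimate $|r-1|^{2\alpha}\geq(r_0-1)^{2\alpha}$ goes through verbatim for non-integer $\alpha$, so your cautious restriction to $\alpha\in\mathbb{Z}^+$ in the last step (matching the theorem's statement) is not forced by the mathematics; your approach would actually upgrade the paper's ``numerical evidence'' remark to a proof.
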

 
  \begin{proof} The only detail is when  $r_0>1$ and $\alpha$ is an integer. Then \eqref{eqa2} gives
\begin{align*}
\frac{s}{r_0(r_0-1)^\alpha}&\leq\frac{1}{(r_0-1)^\alpha}\int_{r_0}^r\frac{dr}{r \sqrt{r^2 -r_0^2 }} \leq \frac{\pi}{2r_0(r_0-1)^\alpha}.
\end{align*}
 Thus $s<\frac{\pi}{2}$ and proving the result.
 \end{proof}
 
 We now consider the case $\alpha\in (-1,0)$. See Fig. \ref{fig6}.
 
   \begin{theorem} \label{t4}
 Let $\alpha\in (-1,0)$. Let $r=r(s)$ by a solution of \eqref{sol2}-\eqref{ini}. The only  constant function is $r(s)=\frac{1}{1+\alpha}$ and the catenary is a circle of radius $\frac{1}{1+\alpha}$. In other case, the catenary is defined in a bounded interval $(-s_1,s_1)$ with $\lim_{s\to\pm s_1}r'(s)=\infty$. Moreover, 
 \begin{enumerate}
 \item If   $r_0<1$ (resp. $ 1<r_0<\frac{1}{1+\alpha}$), the function $r(s)$ is convex (resp. concave).  The catenary  intersects orthogonally the circle $\s^1$ at two points.
 \item If $r_0>\frac{1}{1+\alpha}$, then the function $r(s)$ is convex with with 
 $$\lim_{s\to\pm s_1}r(s)=\lim_{s\to\pm s_1}r'(s)=\infty.$$ 
 \end{enumerate}
 \end{theorem}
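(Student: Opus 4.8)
\emph{Proof idea.} The plan is to reduce everything to the behaviour of two explicit functions of $r$, in the spirit of Section \ref{s2}. The constant solutions are immediate: putting $r'=r''=0$ in \eqref{sol2} forces $\bigl((\alpha+1)r-1\bigr)r^{2}=0$, hence $r\equiv\frac1{1+\alpha}$, the equilibrium $P_\alpha$, parametrizing the circle of radius $\frac1{1+\alpha}>1$. From now on $r$ is nonconstant; by Prop. \ref{pr-s} it is even in $s$, and since $r''(0)=\dfrac{((\alpha+1)r_0-1)r_0}{r_0-1}$, the point $s=0$ is a minimum of $r$ when $r_0<1$ or $r_0>\frac1{1+\alpha}$ and a maximum when $1<r_0<\frac1{1+\alpha}$.

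Next I would obtain a first integral. Repeating the Bernoulli argument that gave \eqref{eqa2} — with $(r-1)^{2\alpha}$ replaced by $|r-1|^{2\alpha}$, which makes it valid for \emph{every} real $\alpha$ rather than only for $\alpha\in\mathbb Z$ — one finds
\[
r'^{2}=r^{2}\Bigl(\frac{F(r)}{F(r_0)}-1\Bigr),\qquad F(r):=r^{2}|r-1|^{2\alpha},
\]
with $F(r_0)>0$ since $r_0\notin\{0,1\}$. Off $r=1$, $F'(r)=2r|r-1|^{2\alpha-1}\bigl((1+\alpha)r-1\bigr)$, so for $\alpha\in(-1,0)$: $F$ increases on $(0,1)$ with $F(1^{-})=+\infty$; and on $(1,\infty)$, $F$ decreases on $(1,\frac1{1+\alpha})$ and increases on $(\frac1{1+\alpha},\infty)$, with $F(1^{+})=F(+\infty)=+\infty$. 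Since $r'^{2}\ge0$ forces $F(r)\ge F(r_0)$ along the solution, these monotonicities pin down the range of $r$: from $r_0$ up to $1$ if $r_0<1$; from $r_0$ down to $1$ if $1<r_0<\frac1{1+\alpha}$; from $r_0$ up to $+\infty$ if $r_0>\frac1{1+\alpha}$. In every case $r'=r\sqrt{F(r)/F(r_0)-1}\to+\infty$ at the finite end of the $r$-range, and the domain $(-s_1,s_1)$ is bounded because $s_1=\int\frac{dr}{r\sqrt{F(r)/F(r_0)-1}}$ converges: near $r=1$ the integrand is $O(|1-r|^{-\alpha})$ with $-\alpha<1$, and near $r=\infty$ it is $O(r^{-(2+\alpha)})$ with $2+\alpha>1$. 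This gives the statements on the domain, on $\lim r'$, and (case (2)) on $\lim r$. Finally, in the cases $r_0<1$ and $1<r_0<\frac1{1+\alpha}$, where $r\to1$, the unit tangent $\gamma'/|\gamma'|$ tends to $(\cos s_1,\sin s_1)$, the radial direction at the limit point $(\cos s_1,\sin s_1)\in\s^{1}$, which is orthogonal to $\s^{1}$ there; hence $\gamma$ meets $\s^{1}$ orthogonally at $s=\pm s_1$.

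For the convex/concave behaviour I would read off the sign of $r''$ from \eqref{sol2} written as $r''=\dfrac{\bigl((\alpha+2)r-2\bigr)r'^{2}+\bigl((\alpha+1)r-1\bigr)r^{2}}{r(r-1)}$. If $r_0<1$, on $(0,1)$ the denominator is negative while $(\alpha+2)r-2<\alpha<0$ and $(\alpha+1)r-1<\alpha<0$ make the numerator negative, so $r''>0$ and $r$ is convex. If $r_0>\frac1{1+\alpha}$, on $(\frac1{1+\alpha},\infty)$ the denominator is positive and, since $r>\frac1{1+\alpha}>\frac2{\alpha+2}$, both numerator terms are positive, so again $r''>0$.

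The remaining assertion — concavity of $r$ when $1<r_0<\frac1{1+\alpha}$ — is the subtle one, and I expect it to be the main obstacle. Substituting the first integral into the numerator of $r''$ turns $r''<0$ (for $r\in(1,r_0)$, where $r-1>0$) into the single inequality $G(r)<F(r_0)$, where $G(r):=r^{2}(r-1)^{2\alpha-1}\bigl((\alpha+2)r-2\bigr)$; this is automatic for $1<r\le\frac2{\alpha+2}$ (there $G(r)\le0$), and $G(r_0)<F(r_0)$ holds exactly because $r_0<\frac1{1+\alpha}$. A logarithmic-derivative computation gives $\operatorname{sign}G'(r)=\operatorname{sign}\bigl(-q(r)\bigr)$ on $(\frac2{\alpha+2},\infty)$ for the quadratic $q(r)=-2(\alpha+1)(\alpha+2)r^{2}+(7\alpha+8)r-4$ (the same $q$ also governs a ``first zero of $r''$'' argument: at a hypothetical first zero $r_*$ the condition $r'''(s_*)\ge0$ reduces to $q(r_*)\ge0$), so $G$ increases wherever $q<0$. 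Since $q$ opens downwards with $q\bigl(\tfrac2{\alpha+2}\bigr)=\tfrac{2\alpha}{\alpha+2}<0$ and $q\bigl(\tfrac1{1+\alpha}\bigr)=\tfrac{\alpha}{1+\alpha}<0$, one gets $q<0$ on the whole interval $\bigl(\tfrac2{\alpha+2},\tfrac1{1+\alpha}\bigr)$ — whence $\sup_{(1,r_0)}G=G(r_0)<F(r_0)$ and $r$ is concave — at least when $q$ has no root there, which holds in particular when its discriminant $\alpha(17\alpha+16)$ is $\le0$, i.e. for $\alpha\in[-\tfrac{16}{17},0)$. Locating the roots of $q$ relative to $\bigl(\tfrac2{\alpha+2},\tfrac1{1+\alpha}\bigr)$ for the remaining $\alpha$, thereby completing the concavity claim, is where I expect the genuine difficulty to lie.
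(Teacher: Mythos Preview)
Your route differs from the paper's, and it is in several respects more rigorous. The paper's proof is terse: it writes the formula
\[
r''=\frac{((\alpha+2)r-2)\,r'^{2}+((\alpha+1)r-1)\,r^{2}}{r(r-1)}
\]
and then simply \emph{asserts} ``$r$ is convex if $r_0\in(0,1)\cup(\tfrac{1}{1+\alpha},\infty)$ and concave if $r_0\in(1,\tfrac{1}{1+\alpha})$'', appealing otherwise only to the phase portrait (Fig.~\ref{fig5}) to say trajectories meet the $u$-axis once. From convexity/concavity it concludes that $r$ reaches $1$ at finite parameter and that $r'\to\infty$ there, and obtains orthogonality via $\lim\langle N,\gamma\rangle=0$ from \eqref{nn}. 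For $r_0>\tfrac{1}{1+\alpha}$ it only records that $r$ is convex and increasing, with no argument for boundedness of the domain.

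What you do differently is to push the Bernoulli computation of Prop.~\ref{pra}/\eqref{eqa2} through for \emph{all} real $\alpha$ (using $|r-1|^{2\alpha}$), obtaining the closed first integral $r'^{2}=r^{2}\bigl(F(r)/F(r_0)-1\bigr)$ with $F(r)=r^{2}|r-1|^{2\alpha}$. This buys you several things the paper does not prove: the exact range of $r$ from the shape of $F$; the boundedness of $(-s_1,s_1)$ via the convergent integral $\int dr/(r\sqrt{F/F(r_0)-1})$ in every case (including $r_0>\tfrac{1}{1+\alpha}$, where the paper says nothing); and the limits of $r$ and $r'$. Your orthogonality argument (tangent becomes radial as $r\to1$, $r'\to\infty$) is equivalent to the paper's $\langle N,\gamma\rangle\to0$.

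On the convex/concave claims: for $r_0<1$ and $r_0>\tfrac{1}{1+\alpha}$ your sign check is exactly what the paper has in mind and is complete. For the concavity when $1<r_0<\tfrac{1}{1+\alpha}$ you are right that the sign of the numerator is not immediate on $(\tfrac{2}{\alpha+2},\tfrac{1}{1+\alpha})$, and your reduction to $G(r)<F(r_0)$ with the quadratic $q$ is correct. The paper does \emph{not} supply any argument here beyond the displayed formula for $r''$; so the difficulty you isolate for $\alpha\in(-1,-\tfrac{16}{17})$ is not a step you missed in the paper --- it is a step the paper does not carry out. Your approach is strictly more detailed on this point; completing the root--location of $q$ relative to $\bigl(\tfrac{2}{\alpha+2},\tfrac{1}{1+\alpha}\bigr)$ (or bounding $\max G$ on $(1,r_0]$ by $F(r_0)$ via $G\le F$ and the monotonicity of $F$) would close it.
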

 
 \begin{proof}
Notice that $r''(0)=\frac{((\alpha+1)r_0-1)^2r_0^2}{r_0(r_0-1)}$.   By the phase portrait (Fig. \ref{fig5}, left), the trajectories only meet once that $u$-axis, which corresponds with $s=0$. By Eq. \eqref{sol2}, we have 
$$r''=\frac{((\alpha+2)r-2)r'^2+((\alpha+1)r-1)r^2}{r(r-1)}.$$
Then $r$ is convex  if   $r_0\in (0,1)\cup (\frac{1}{1+\alpha},\infty)$ and concave if   $r_0\in (1,\frac{1}{1+\alpha})$. 

Consequently, if $r_0\in (0,1)$ and because the function is convex, then  $r$ attains the value $1$. To be precise, the maximal domain of $r$ is a bounded interval   $(-s_1,s_1)$ where  $\lim_{s\to\pm s_1}r(s)=1$. Moreover, because $r$ does not attain the value $1$, necessarily $r'$ is $\infty$ at $\pm s_1$.  Similarly, if $r_0\in (1,\frac{1}{1+\alpha})$, and by concavity of $r$, the function $r$ is defined again in a interval $(-s_1,s_1)$ with $\lim_{s\to\pm s_1}r(s)=1$ and $\lim_{s\to\pm s_1}r'(s)=\infty $. To calculate the angle of intersection with $\s^1$, we use the expression of $N$ in \eqref{nn}. Then  $\lim_{s\to\pm s_1}\langle N,\gamma\rangle=0$, proving that the intersection of the catenary with $\s^1$ must be orthogonal. 
 
  Suppose $r_0>\frac{1}{1+\alpha}$. Then we know that $r$ is convex and $r$ is increasing if $s>0$. 
 
 \end{proof}

    \begin{figure}[hbtp]
 \begin{center}
\includegraphics[width=.35\textwidth]{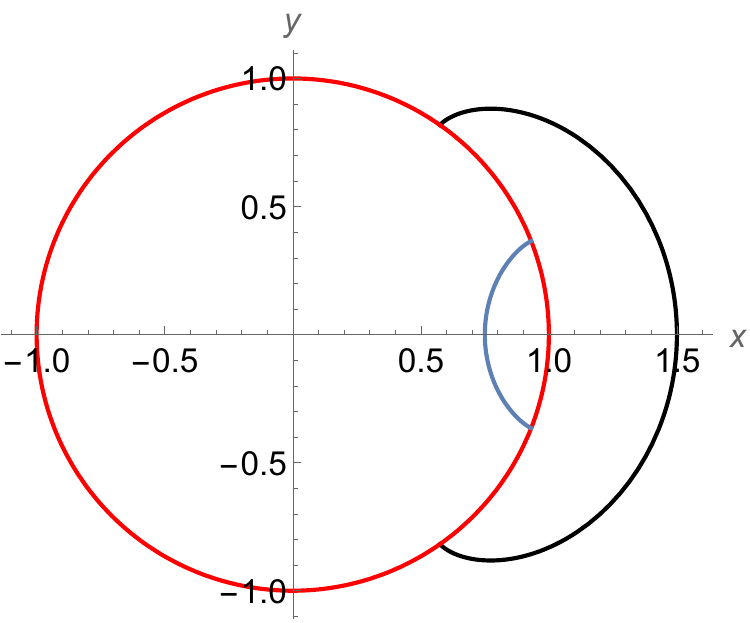}  \quad \includegraphics[width=.5\textwidth]{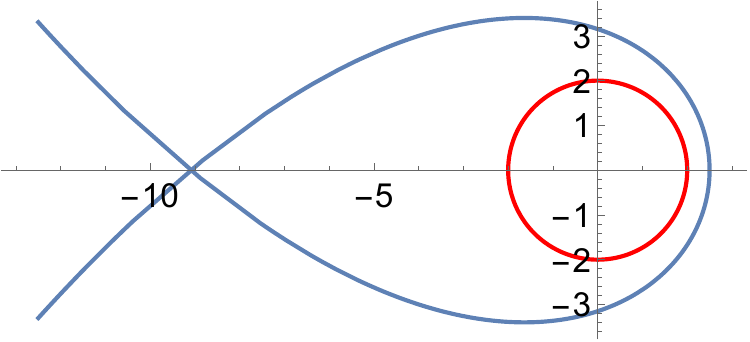}   
\end{center}
\caption{Case $\alpha\in (-1,0)$. Here $\alpha=-\frac12$. Solutions of \eqref{sol2}-\eqref{ini}.  Left:   $r_0=.75$, $r_0=1.5$ and the circle $\s^1$ (red). Right:     $r_0=2.5$ and the constant solution $r=\frac{1}{1+\alpha}=2$ (red).}
\label{fig6}
\end{figure}

\begin{remark} If $\alpha\in (-1,0)$, the behaviour of the catenary is difficult to describe completely. For example, when $r_0<\frac{1}{1+\alpha}$ the maximal interval can be big so the catenary can turn around the origin before to intersect the circle $\s^1$. See Fig. \ref{fig7}.
\end{remark}

    \begin{figure}[hbtp]
 \begin{center}
\includegraphics[width=.35\textwidth]{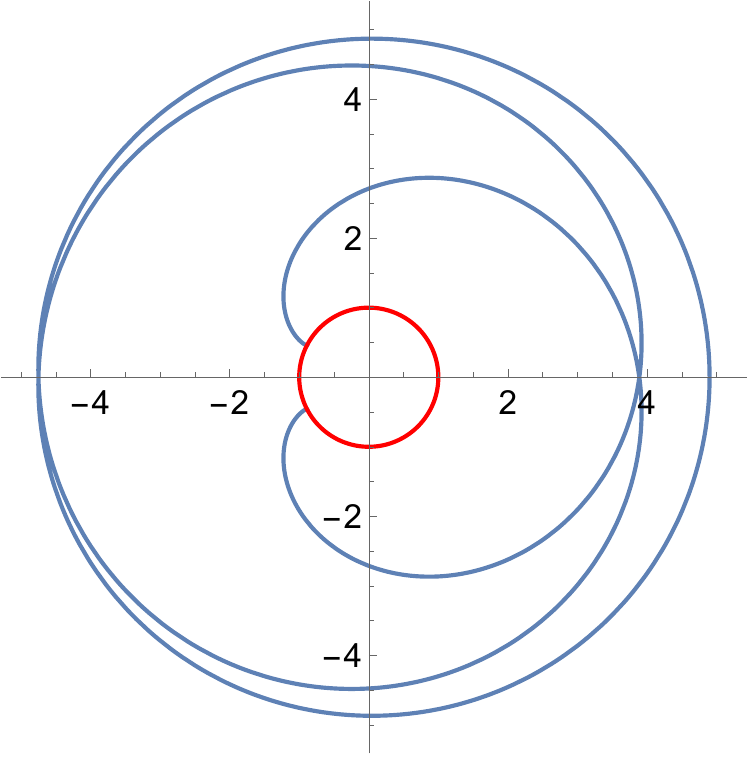}  \quad \includegraphics[width=.4\textwidth]{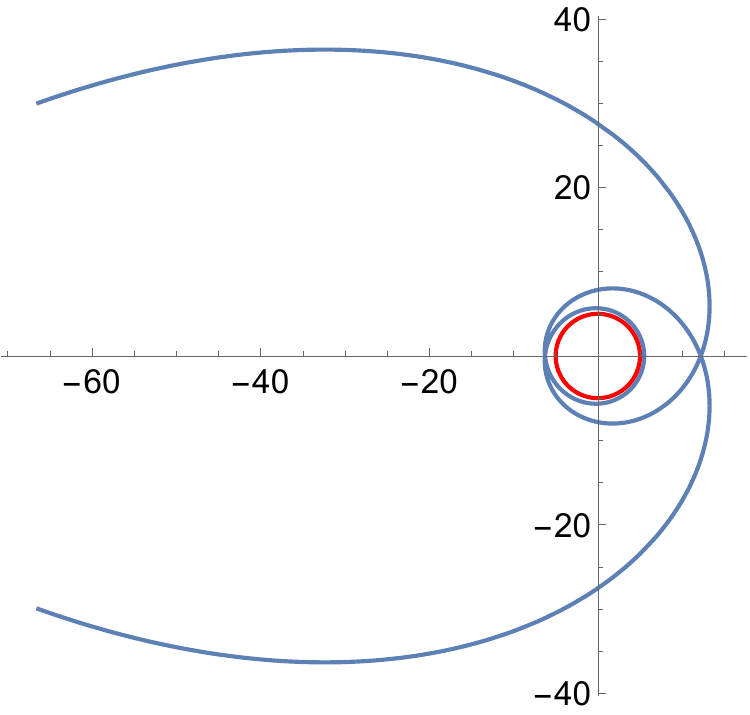}   
\end{center}
\caption{Case $\alpha\in (-1,0)$. Here $\alpha=-0.8$, so $\frac{1}{1+\alpha}=5$. Left:   $r_0=4.9$ with the circle $\s^1$ (red). Right: $r_0=5.5$ with the circle $r=5$ (red). }
\label{fig7}
\end{figure}

Finally consider the case $\alpha\leq -1$. See Fig. \ref{fig8}.

\begin{theorem} \label{t6}
 Let $\alpha\leq -1$. Let $r=r(s)$ by a solution of \eqref{sol2}-\eqref{ini}. Then the function $r(s)$ is defined in a bounded interval $(-s_1,s_1)$ being symmetric with respect to the $r$-axis. Moreover, the  catenary   intersects orthogonally $\s^1$ at two points being $r$ a convex (resp. concave) function if $r_0<1$ (resp. $r_0>1$). 
 \end{theorem}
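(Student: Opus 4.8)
The plan is to follow the template of the case $\alpha=1$ in Section \ref{s2}: read the qualitative behaviour off the phase portrait of \eqref{eqs2}, extract convexity from a sign analysis of $r''$, and determine the endpoint behaviour from the first integral. First I would record \emph{symmetry}, which is immediate exactly as in Proposition \ref{pr-s}: \eqref{sol2} is autonomous and $r'(0)=0$, so $\bar r(s)=r(-s)$ solves the same initial value problem, whence $r(-s)=r(s)$, and it suffices to study $r$ on $[0,s_1)$, where $(-s_1,s_1)$ is the maximal interval. Next I would set up \emph{the phase plane}: for $\alpha\le-1$ the system \eqref{eqs2} has no equilibrium in $A$ (the candidate $P_\alpha=(\tfrac1{1+\alpha},0)$ has non-positive abscissa, and disappears when $\alpha=-1$); on the axis $\{v=0\}$ the field is $\big(0,\tfrac{u((\alpha+1)u-1)}{u-1}\big)$, and since $\alpha+1\le0$ forces $(\alpha+1)u-1<0$ for all $u>0$, this second entry is positive on $(0,1)$ and negative on $(1,\infty)$. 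Hence $s=0$ is a strict minimum of $r$ if $r_0<1$ and a strict maximum if $r_0>1$, the trajectory never meets $u=1$ (the solutions cannot cross $\s^1$, as already observed), and $r$ is strictly monotone on $(0,s_1)$.

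For \emph{convexity/concavity and boundedness of the domain} I would start from \eqref{sol2}, rewritten as
\[
r''=\frac{\big((\alpha+2)r-2\big)r'^2+\big((\alpha+1)r-1\big)r^2}{r(r-1)}.
\]
Since $\alpha+1\le0$, the term $\big((\alpha+1)r-1\big)r^2$ is negative for every $r>0$. If $r_0<1$, then $r\in(0,1)$ throughout and $(\alpha+2)r-2<r-2<0$, so the numerator is negative while $r(r-1)<0$: hence $r''>0$ and $r$ is convex. Then $r'(0)=0$ together with $r''>0$ gives $r'(s)\ge r'(s_0)>0$ for $s\ge s_0>0$, so $r$ would increase past $1$ unless $s_1<\infty$; since a finite limit of $r$ with bounded $r'$ would allow the solution to be continued, we get $s_1<\infty$, $r\to1^-$ and $r'\to+\infty$ as $s\to s_1^-$. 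If $r_0>1$ and $\alpha\le-2$, then $(\alpha+2)r-2\le-2<0$, the numerator is negative and $r(r-1)>0$: hence $r''<0$, $r$ is concave, and the symmetric argument yields $s_1<\infty$, $r\to1^+$, $r'\to-\infty$. For $r_0>1$ with $-2<\alpha\le-1$ I would still obtain $s_1<\infty$, $r\to1^+$, $r'\to-\infty$ from the phase portrait (trajectories of \eqref{eqs2} in $\{u>1\}$ are graphs $v=v(u)$ escaping to $v=-\infty$ as $u\to1$) together with the integrability of $dr/r'$ near $r=1$: a boundary-layer analysis of the reduced first-order equation gives $r'\sim C|r-1|^{\alpha}$ near $r=1$, and $\int|r-1|^{|\alpha|}\,dr<\infty$ since $|\alpha|\ge1$; when $\alpha\in\mathbb Z$ this asymptotic is explicit from the first integral $r'^2=f(r)g(r)$, $f(r)=r^4(r-1)^{2\alpha}$, $g$ as in \eqref{gg}.

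Finally I would treat \emph{orthogonality}: by \eqref{nn}, $\langle N,\gamma\rangle=-r^2/\sqrt{r^2+r'^2}$, so $\cos\varphi=-r/\sqrt{r^2+r'^2}\to0$ as $s\to\pm s_1$ because $r\to1$ while $|r'|\to\infty$; hence $\varphi\to\pi/2$ and $\gamma$ meets $\s^1$ perpendicularly at each of its two endpoints. The step I expect to be the main obstacle is the concavity of $r$ when $r_0>1$ in the range $-2<\alpha\le-1$: there the coefficient $(\alpha+2)r-2$ of $r'^2$ turns positive once $r>\tfrac2{\alpha+2}$, so the sign of $r''$ is no longer decided by inspection; one has to insert the first integral $r'^2=r^4(r-1)^{2\alpha}g(r)$ into the numerator of $r''$ and control the resulting one-variable inequality, and a closer look suggests that in this sub-range the concavity may have to be restricted to the arc of $\gamma$ lying near $\s^1$ (equivalently, to $r$ below some threshold).
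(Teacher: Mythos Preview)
Your approach is essentially the one the paper takes: symmetry from Proposition~\ref{pr-s}, the phase portrait of \eqref{eqs2} to see that there are no equilibria when $\alpha\le-1$ and that each trajectory meets $\{v=0\}$ exactly once, a sign analysis of $r''$ for convexity/concavity, and the computation $\langle N,\gamma\rangle=-r^2/\sqrt{r^2+r'^2}\to0$ for orthogonality at the endpoints. Your write-up is in fact more detailed than the paper's, which simply asserts that ``$r''$ has constant sign, being $r''>0$ if $r_0<1$ and $r''<0$ if $r_0>1$'' without the case split you give, and then reads the rest off from that.

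The concern you flag in your last paragraph is not a defect of your argument relative to the paper---it is a genuine issue that the paper glosses over. For $r_0>1$ and $-2<\alpha\le-1$ the sign of the numerator $((\alpha+2)r-2)r'^2+((\alpha+1)r-1)r^2$ is \emph{not} forced to be negative once $r>2/(\alpha+2)$, and indeed the concavity claim can fail. Using the Beltrami first integral (valid for every $\alpha$, not only integers), one gets
\[
r'^2=\frac{(r_0-1)^{2|\alpha|}}{r_0^{2}}\,r^2\Big(\frac{r^2}{(r-1)^{2|\alpha|}}-\frac{r_0^2}{(r_0-1)^{2|\alpha|}}\Big),
\]
and for $\alpha=-1$, $r_0=100$, $r=3$ this gives $r'^2\approx10.85$, whence $(r-2)r'^2-r^2\approx1.85>0$ and $r''>0$. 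So the global concavity asserted in the theorem does not hold in that sub-range; your instinct to restrict it to a neighbourhood of $\s^1$ (where $(\alpha+2)r-2<0$) is the honest fix. Note that your endpoint analysis---monotonicity of $r$ from the phase portrait, the asymptotic $|r'|\sim C\,(r-1)^{\alpha}$ from the first integral, and integrability of $(r-1)^{-\alpha}$ near $r=1$---does not rely on concavity, so the conclusions $s_1<\infty$, $r\to1$, $|r'|\to\infty$, and orthogonal intersection with $\s^1$ survive intact.
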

 
 \begin{proof} This situation is similar to that of $\alpha\in (-1,0)$ when $r_0<\frac{1}{1+\alpha}$. Recall that now there are no equilibrium points because $1+\alpha\leq 0$. On the other hand, and by the phase portrait (Fig. \ref{fig8}, left), each trajectory of the phase plane intersects the $u$-axis at exactly one point. This proves that $r(s)$ attains only a critical point, and this occurs precisely at $s=0$. The function $r''$ has constant sign being $r''>0$ if $r_0<1$ and $r''<0$ if $r_0>1$. If $r_0<1$, then $r$ is convex. Since $r$ is an increasing and convex function, then $r$ attains the value $r=1$ at the end of its domain. This proves that this domain is a bounded interval $(-s_1,s_1)$. Similarly, if $r_0>1$, the function $r$ is concave and decreasing if $s>0$. This shows that $r$ also attains the value $r=1$. 

 \end{proof}
 
    \begin{figure}[hbtp]
 \begin{center}
\includegraphics[width=.4\textwidth]{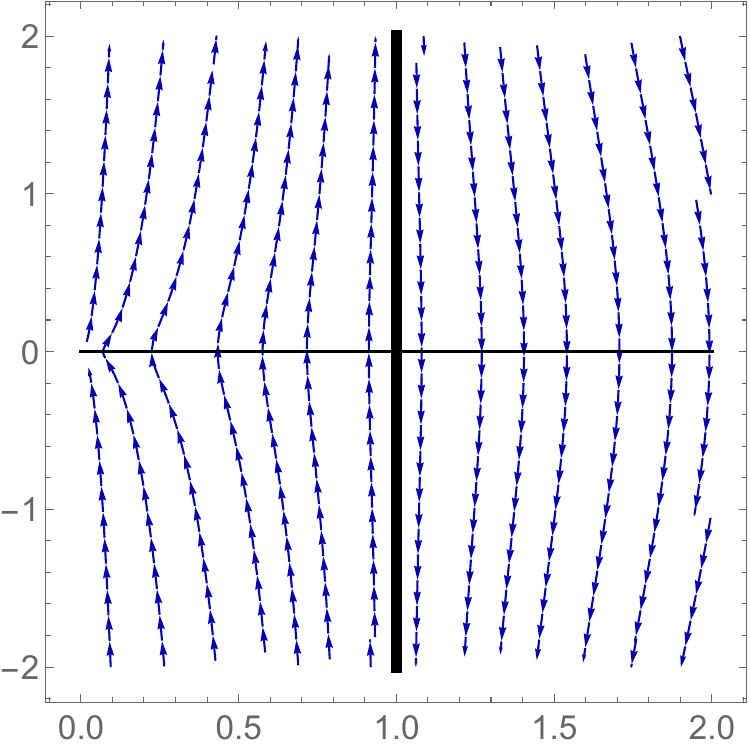}  \quad \includegraphics[width=.5\textwidth]{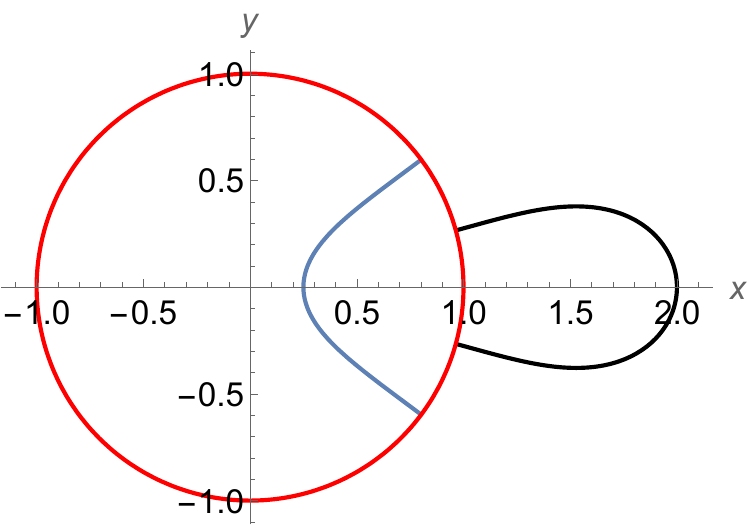}   
\end{center}
\caption{Case $\alpha\leq 1$. The phase portrait (left) and two solutions of   \eqref{sol2}-\eqref{ini}  when $\alpha=-3$. Here  $r_0=.25$ and $r_0=2$. In both cases, we show the circle $\s^1$.}
\label{fig8}
\end{figure}

Numerical evidences show that the maximal domain $(-s_1,s_1)$ of $r(s)$ is included in $(-\frac{\pi}{2},\frac{\pi}{2})$. Consequently, the catenary is a radial graph on a sub-arc of $\s^1_{+}$: see Fig. \ref{fig8}, right.

The case $\alpha=-2$ is special because inversions with respect $\s^1$ preserve the hanging chain problem with respect to $\s^1$. The following result is immediate. See Fig. \ref{fig9}.

\begin{corollary} Suppose $\alpha=-2$. If $r=r(s)$ is a solution of \eqref{eq2}, then $\rho=\rho(s)=\frac{1}{r(s)}$ is also a solution of \eqref{eq2} for $\alpha=-2$. In particular, we have $r(s;r_0)=\frac{1}{r(s;\frac{1}{r_0})}$.
\end{corollary}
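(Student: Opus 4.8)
Here ``Eq. \eqref{eq2} for $\alpha=-2$'' is to be understood as Eq. \eqref{sol2} with $\alpha=-2$, namely $r(r-1)r''=-2r'^2-r^2(r+1)$. The plan is to give first the conceptual reason for the statement, then a direct verification, and finally to deduce the relation $r(s;r_0)=1/r(s;1/r_0)$ from uniqueness.

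First I would observe that the functional $E_{-2}$ is invariant under the inversion $\iota$ with respect to $\s^1$, which in polar coordinates is $(r,\theta)\mapsto(1/r,\theta)$. If a curve is written as $\gamma(s)=r(s)(\cos s,\sin s)$, then $\iota\circ\gamma$ corresponds to $\rho(s)=1/r(s)$ with $\rho'=-r'/r^2$, so the arc element transforms by $\sqrt{\rho^2+\rho'^2}\,ds=r^{-2}\sqrt{r^2+r'^2}\,ds$ while the weight transforms by $|\rho-1|^{-2}=r^2(r-1)^{-2}$. Multiplying, the powers $r^{\pm2}$ cancel and $|\rho-1|^{-2}\sqrt{\rho^2+\rho'^2}\,ds=|r-1|^{-2}\sqrt{r^2+r'^2}\,ds$, whence $E_{-2}[\iota\circ\gamma]=E_{-2}[\gamma]$. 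Since a diffeomorphism of $\r^2\setminus\{O\}$ preserving the functional carries its extremals to extremals, $\rho=1/r$ solves \eqref{sol2} with $\alpha=-2$ whenever $r$ does.

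To keep the argument self-contained I would also check this by a direct substitution. Set $F[r]:=r(r-1)r''+2r'^2+r^2(r+1)$, so that \eqref{sol2} with $\alpha=-2$ is the equation $F[r]=0$. Using $\rho'=-r'/r^2$ and $\rho''=(2r'^2-rr'')/r^3$ and clearing the common denominator $r^5$, one finds the clean identity $F[1/r]=F[r]/r^5$: in the expansion the two terms $\pm2rr'^2$ cancel and the remaining terms reassemble into $F[r]$. Hence $F[\rho]=0$ whenever $F[r]=0$. This short polynomial identity is the only computation needed.

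Finally, for the ``in particular'' assertion I would invoke uniqueness for the initial value problem \eqref{sol2}--\eqref{ini}. If $r(s)=r(s;r_0)$ satisfies $r(0)=r_0$, $r'(0)=0$, then by the above $\rho(s)=1/r(s;r_0)$ is also a solution of \eqref{sol2} with $\alpha=-2$, and it has initial data $\rho(0)=1/r_0$ and $\rho'(0)=-r'(0)/r_0^2=0$; by uniqueness $\rho(s)=r(s;1/r_0)$, that is $r(s;r_0)=1/r(s;1/r_0)$. I do not expect any genuine obstacle here: the only points deserving a little care are the bookkeeping in the substitution and the observation that the inversion preserves the symmetric initial condition $r'(0)=0$, which is exactly what makes the uniqueness argument apply.
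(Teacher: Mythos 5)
Your proposal is correct and follows the same route the paper intends: the paper states the corollary as ``immediate'' from the observation that inversion with respect to $\s^1$ preserves the functional $E_{-2}$, which is exactly your first argument (and you rightly read the reference to \eqref{eq2} as \eqref{sol2} with $\alpha=-2$). Your direct verification of the identity $F[1/r]=F[r]/r^5$ and the uniqueness argument for the initial conditions simply supply the details the paper leaves unwritten.
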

 
  \begin{figure}[hbtp]
 \begin{center}
\includegraphics[width=.5\textwidth]{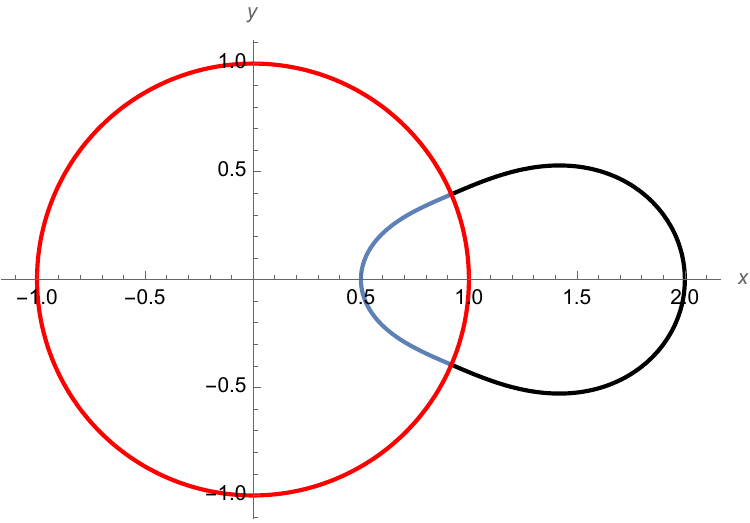}     
\end{center}
\caption{Case $\alpha=-2$. The solutions of     \eqref{sol2}-\eqref{ini}  for initial conditions $r(0)=\frac12$ and  $r(0)=2$.  }
\label{fig9}
\end{figure}
\section*{Acknowledgements}  

The author thanks to Prof. Maciej Czarnecki   for proposing the problem and   acknowledges his hospitality in the University of Lodz  during the preparation of this paper.  This research has been partially supported by MINECO/MICINN/FEDER grant no. PID2020-117868GB-I00, and by the ``Mar\'{\i}a de Maeztu'' Excellence Unit IMAG, reference CEX2020-001105- M, funded by MCINN/AEI/10.13039/501100011033/ CEX2020-001105-M.

\section*{Declarations}

{\bf Data Availability.} There is no additional data and materials.\\
{\bf EthicalApproval.} Not applicable.\\
{\bf Competing interests.} There is no competing interests.\\
{\bf  Conflicts of interest.} The authors declares no conflict of interest. 
 
\end{document}